\documentclass[12pt]{article}    
\usepackage{amsmath,amssymb,amsthm,amsfonts}

\usepackage{latexsym}

\usepackage{mathrsfs}
\usepackage{enumitem}
\usepackage{caption}
\usepackage{graphicx}
\usepackage{flafter}
\usepackage{geometry}
\usepackage{tikz}
\usepackage{ytableau}
\allowdisplaybreaks
\usepackage{verbatim}
\usepackage[colorlinks,linkcolor=blue,anchorcolor=blue,citecolor=blue]{hyperref}
\usepackage{color}

\numberwithin{equation}{section}
\numberwithin{figure}{section}

\hypersetup{colorlinks = true}
\newtheorem{thm}{Theorem}[section]
\newtheorem{conj}[thm]{Conjecture}

\newtheorem{lem}[thm]{Lemma}

\newtheorem{prop}[thm]{Proposition}
\newtheorem{prob}[thm]{Problem}

\allowdisplaybreaks
\newcommand{\tr}{\operatorname{tr}}

\newcommand{\fix}{\operatorname{fix}}

\begin{document}
\begin{center}
	{\bf \Large Newton polytopes of immanants of some combinatorial matrices}
\end{center}

\begin{center}
	{Candice X.T. Zhang      \\[6pt]}
	
	{\it~ Academy of Mathematics and Systems Science, \\
		Chinese Academy of Sciences, Beijing 100190, P. R. China\\[8pt]
		
		Email: {\tt zhangxutong@amss.ac.cn}}
\end{center}
\noindent\textbf{Abstract.}
The immanants of combinatorial matrices have many significant properties, including $m$-positivity and Schur positivity.
While the immanants of Jacobi-Trudi matrices are known to be both $m$-positive and Schur positive, those of Giambelli matrices have only been proven to be $m$-positive, with Schur positivity remaining a conjecture. These positivity properties rely heavily on lattice path interpretations.
In this paper, we study the Newton polytopes of immanants for these two classes of matrices. Using the lattice path method, we verify the saturation property for the Newton polytopes of Jacobi-Trudi matrices in special cases. For Giambelli matrices, we prove that this property holds for all immanants. To achieve this, we obtain the explicit coefficients of the largest monomial (in the dominance order) in the monomial expansion of the immanants of Giambelli matrices.

\noindent \emph{AMS Mathematics Subject Classification 2020: }05A05, 05E05, 52B05

\noindent \emph{Keywords:}  Newton polytope, immanant, lattice path, Jacobi-Trudi matrix, Giambelli matrix

\section{Introduction}\label{sec-intro}

Given a matrix $A=\left(a_{i,j} \right)_{i,j=1}^{n}$, and a partition $\nu\vdash n$, the immanant of $A$ with respect to $\nu$ is defined by
\[{\rm Imm}_\nu A=\sum_{\pi\in\mathfrak{S}_{n}}\chi^{\nu}(\pi)\prod_{i=1}^{n} a_{i,\pi(i)},\]
where $\chi^{\nu}$ denotes the irreducible character of the symmetric group $\mathfrak{S}_{\ell}$ associated with the partition $\nu$ (refer to Section~\ref{sec-pre} or Fulton's book~\cite{Fulton} for the details).
This notion was introduced by Littlewood and Richardson~\cite{Littlewood} as a natural generalization of determinants (when $\nu=(1,1,\ldots,1)$) and permanents (when $\nu=(n)$), and have since then become a rich source of connections between representation theory and combinatorics.

The study of immanants of combinatorial matrices was initiated by Goulden and Jackson~\cite{GJ92}, who proved positivity properties for Jacobi-Trudi matrices. 
For a skew partition $\lambda/\mu$, the Jacobi-Trudi matrix is defined as
\[H(\lambda,\mu) = (h_{(\lambda_i-i)-(\mu_j-j)})_{i,j=1}^n,\] 
where $h_k$ denotes the $k$-th complete homogeneous symmetric function (see~\eqref{eq-def-h}). It is well-known that every skew Schur function (refer to~\eqref{eq-s-m}) can be expressed as the determinant of such a matrix:
\begin{equation}\label{eq-JT-identity}
	s_{\lambda/ \mu}=\det\left(H(\lambda,\mu) \right).
\end{equation}
Building on this determinantal formula, Goulden and Jackson initiated the study of immanants of Jacobi-Trudi matrices and proved that all such immanants expand with nonnegative coefficients in the monomial symmetric basis (see~\eqref{eq-def-monomial}), a property known as $m$-positivity. Later, Greene~\cite{Gre92} extended this result to arbitrary immanants, and Stembridge~\cite{Stembridge-1991} conjectured that these immanants are even Schur-positive. This conjecture was confirmed by Haiman~\cite{Hai93} using deep results from Kazhdan-Lusztig theory. 

For Giambelli matrices, the situation is different. Let $\lambda$ be a partition of rank $k$ with Frobenius notation $(\alpha\mid \beta)=(\alpha_1,\ldots,\alpha_k \mid \beta_1,\ldots,\beta_k)$, where $\alpha_i = \lambda_i - i$ and $\beta_i = \lambda'_i - i$ (see~\eqref{eq-Frobenius} and the definitions introduced before it). The associated Giambelli matrix is defined as
\[
G_{\lambda} = \bigl( s_{(\alpha_i \mid \beta_j)} \bigr)_{i,j=1}^{k}.
\]
Giambelli matrices also have deep connections with Schur functions as 
\begin{equation}\label{eq-Giambelli-identity}
	s_{\lambda}=\det \left( G_{\lambda}\right),
\end{equation}
refer to~\cite{ER-Giam} for a combinatorial proof. Using a planar network construction together with the Gessel-Viennot technology~\cite{Gessel-Viennot-1985}, Hamel and Goulden~\cite{HamelGoulden} unified four determinant formulas~\cite{ER-Giam,LP-1988,FK1997}, including the Jacobi-Trudi identity~\eqref{eq-JT-identity}, the dual Jacobi-Trudi identity, the Giambelli identity~\eqref{eq-Giambelli-identity}, and the Lascoux-Pragacz identity.
Based on the construction of cutting strips by Chen, Yan and Yang~\cite{CYY2004} (see Figure~\ref{fig-skew-partition} for an example) together with Wolfgang's criterion~\cite{Wolfgang} for immanants of matrices with $D$-compatible lattice paths, Li~\cite{LiPhD} proved that the immanants of four classes of Hamel--Goulden matrices are $m$-positive. However, their Schur positivity remains an open problem~\cite{LiPhD}.

In this paper, we study the Newton polytopes of immanants of these two families of matrices.
From now on, all symmetric functions can be viewed as symmetric polynomials by restricting in finitely many variables and we usually omit the variables.
For a real polynomial $f = \sum_{\alpha \in \mathbb{N}^n} c_\alpha \mathbf{x}^\alpha \in \mathbb{R}[x_1, \ldots, x_n]$, its support is
\[
\mathrm{supp}(f) = \{ \alpha \in \mathbb{N}^n \mid c_\alpha \ne 0 \},
\]
and its Newton polytope is
\[
\mathrm{Newton}(f) = \mathrm{conv}\{ \alpha \mid \alpha \in \mathrm{supp}(f) \} \subseteq \mathbb{R}^n.
\]
We say $f$ has a saturated Newton polytope, or is SNP for short, if
\[
\mathrm{supp}(f) = \mathrm{Newton}(f) \cap \mathbb{Z}^n.
\]
Many combinatorial polynomials are known to have this property, including (skew) Schur polynomials~\cite{Rado}, Schubert polynomials~\cite{FMD2018}, Stanley symmetric polynomials~\cite{MTY2017}, and dual $k$-Schur polynomials~\cite{WZZ2025}; see~\cite{MTY2017,WZZ2025} for further background.

For the immanants of Jacobi-Trudi matrices and Giambelli matrices, their Newton polytopes turn out to be $\lambda$-permutohedra. In particular, some of them can also be viewed as the dilation of simplices. Recall that the $\lambda$-permutahedron $\mathcal{P}_\lambda$ is the convex hull of the $\mathfrak{S}_n$-orbit of a partition $\lambda = (\lambda_1, \ldots, \lambda_n)$.
A simplex is the convex hull of $\{ u_i : 1\le i\le n\}$ (where $u_i$ are standard basis vectors). 
To be noted that $\mathcal{P}_{(d)}$ is the $d$-dilation of the simplex of the same dimension.
In addition, each $\lambda$-permutahedron can be viewed as a generalized permutahedron, which is a deformation of the usual permutahedron $\Pi_n$. Here a usual permutahedron is defined as the convex hull of all permutations of $(1,2,\dots,n)$, i.e., $\Pi_n = \operatorname{conv}\{(\pi(1),\pi(2),\dots,\pi(n)) \mid \pi \in \mathfrak{S}_n\}$; see for instance \cite{POS2009}.
Monical et al.~\cite{MTY2017} observed that every Schur polynomial is SNP and that its Newton polytope is a $\lambda$-permutahedron. Their observation is based on a result of Rado \cite[Theorem 1]{Rado}, which relates permutahedra to the dominance order.
\begin{thm}[{\cite[Theorem 1]{Rado}}]\label{thm-Rado-Schur}
	Let $\lambda$ and $\mu$ be two partitions of $d$. Then
	\[
	\mathcal{P}_{\mu} \subseteq \mathcal{P}_{\lambda}\text{ if and only if } \mu \unlhd \lambda.
	\]
\end{thm}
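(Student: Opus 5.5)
We prove both directions of Theorem~\ref{thm-Rado-Schur}, padding $\lambda$ and $\mu$ with zeros so that both lie in $\mathbb{R}^n$ with the same $n$. Recall that $\mu \unlhd \lambda$ means $\mu_1+\cdots+\mu_i \le \lambda_1+\cdots+\lambda_i$ for every $i$; equality holds at $i=n$ because $|\mu|=|\lambda|=d$.

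\emph{($\Rightarrow$).} Suppose $\mathcal{P}_\mu\subseteq\mathcal{P}_\lambda$. Then $\mu$ is a convex combination $\sum_\sigma c_\sigma\,\sigma(\lambda)$ of permutations of $\lambda$. Fix $i$ and consider the linear functional $x\mapsto x_1+\cdots+x_i$. On each permuted vector $\sigma(\lambda)$ it equals a sum of $i$ entries of $\lambda$, which is at most $\lambda_1+\cdots+\lambda_i$. Taking the convex combination gives $\mu_1+\cdots+\mu_i\le\lambda_1+\cdots+\lambda_i$. Since this holds for every $i$, we get $\mu\unlhd\lambda$.

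\emph{($\Leftarrow$).} Since $\mathcal{P}_\lambda$ is convex and stable under permuting coordinates, it suffices to show $\mu\in\mathcal{P}_\lambda$; every permutation of $\mu$ then lies in $\mathcal{P}_\lambda$ as well. We induct on $\sum_i |\lambda_i-\mu_i|$. If $\mu\ne\lambda$, let $j$ be the first index with $\mu_j\ne\lambda_j$. Dominance forces $\mu_j<\lambda_j$. Because the totals agree, there is a first index $k>j$ with $\mu_k>\lambda_k$.

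Apply a Robin Hood transfer to $\lambda$: move $\varepsilon=\min(\lambda_j-\mu_j,\ \mu_k-\lambda_k)$ from coordinate $j$ to coordinate $k$. Call the result $\lambda'$. We check three things.
\begin{itemize}
\item $\lambda'$ is still weakly decreasing. For $j<l<k$ we have $\lambda_l\le\mu_l$, so $\lambda'_j\ge\mu_j\ge\mu_l\ge\lambda_l$ and $\lambda'_k\le\mu_k\le\mu_l\le\lambda_l$. For the neighbours outside this range, $\lambda'_j\ge\mu_j\ge\mu_{j+1}$ when $k=j+1$, with similar checks at the other ends.
\item $\mu\unlhd\lambda'$. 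The partial sums of $\lambda'$ agree with those of $\lambda$ except for $j\le i<k$, where they drop by $\varepsilon$. In that range they remain at least those of $\mu$: the prefix up to $j-1$ agrees, $\lambda'_j\ge\mu_j$, and the gap only grows because $\lambda_l\le\mu_l$ is false there only after $k$.
\item $\lambda'$ is a convex combination of $\lambda$ and its transposition $\tau_{jk}\lambda$, namely $\lambda'=(1-t)\lambda+t\,\tau_{jk}\lambda$ with $t=\varepsilon/(\lambda_j-\lambda_k)\in[0,1]$. Hence $\mathcal{P}_{\lambda'}\subseteq\mathcal{P}_\lambda$.
\end{itemize}
The transfer strictly decreases $\sum_i|\lambda_i-\mu_i|$, by $2\varepsilon$. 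Because all entries are integers, $\varepsilon\ge1$, so the induction terminates and gives $\mu\in\mathcal{P}_{\lambda'}\subseteq\mathcal{P}_\lambda$.

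The main obstacle is the first bullet: verifying that $\lambda'$ stays a partition and stays above $\mu$ in dominance, which depends on choosing $j,k$ as the first discrepancies. An alternative route avoids this bookkeeping: by Hardy--Littlewood--P\'olya, $\mu=D\lambda$ for some doubly stochastic $D$, and Birkhoff--von Neumann then writes $\mu$ as a convex combination of permutations of $\lambda$.
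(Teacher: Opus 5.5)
The paper does not prove this statement; it only cites Rado, so your argument has to stand on its own. Your $(\Rightarrow)$ direction is correct. So are the convex-combination step (third bullet), the decrease of $\sum_i|\lambda_i-\mu_i|$ by $2\varepsilon$, and the dominance check in the second bullet.

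The first bullet, however, is false. Minimality of $k$ gives $\mu_l\le\lambda_l$ for $j<l<k$, not $\lambda_l\le\mu_l$. Your second bullet in fact relies on the correct direction, so the two bullets use opposite inequalities on the same range. With $j$ the \emph{first} discrepancy, $\lambda'_j=\lambda_j-\varepsilon$ can fall below $\lambda_{j+1}$. For example, $\lambda=(3,3,0)$ and $\mu=(2,2,2)$ give $j=1$, $k=3$, $\varepsilon=1$, and $\lambda'=(2,3,1)$, which is not a partition. Your induction hypothesis is stated for partitions under $\unlhd$, so it cannot be invoked for $\lambda'$ as written.

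Either of two easy repairs closes the gap.

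(i) Keep $k$ as the first index with $\mu_k>\lambda_k$, but let $j<k$ be the \emph{largest} index with $\lambda_j>\mu_j$. Such a $j$ exists because the prefix sums of $\lambda$ up to $k$ dominate those of $\mu$ while $\lambda_k<\mu_k$. Then $\lambda_l=\mu_l$ for $j<l<k$, and the chain $\lambda'_j\ge\mu_j\ge\mu_l=\lambda_l\ge\mu_k\ge\lambda'_k$ holds (or $\lambda'_j\ge\mu_j\ge\mu_k\ge\lambda'_k$ when $k=j+1$). Together with your outer-neighbour checks, this shows $\lambda'$ is weakly decreasing. In the second bullet, replace ``the prefix up to $j-1$ agrees'' by ``is at least as large,'' which holds since $\lambda_l\ge\mu_l$ for all $l<k$. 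The rest is unchanged.

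(ii) Alternatively, drop sortedness of $\lambda$ altogether. Run the same induction for arbitrary nonnegative integer vectors $\lambda$ whose prefix sums dominate those of the partition $\mu$, with equal totals, and conclude $\mu\in\operatorname{conv}(\mathfrak{S}_n\lambda)$. Your second and third bullets and the termination argument only use that $\mu$ is weakly decreasing (e.g.\ $\lambda_j>\mu_j\ge\mu_k>\lambda_k$), so the first bullet is no longer needed.

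With either fix the proof is complete. The Hardy--Littlewood--P\'olya plus Birkhoff--von Neumann route you mention is also a valid alternative.
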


Based on this result, Monical, Tokcan, and Yong~\cite{MTY2017} provided a sufficient condition for a linear combination of Schur polynomials
 to have a saturated Newton polytope.
\begin{thm}[{\cite[Proposition 2.5]{MTY2017}}]\label{thm-SNP-Schur-combin}
	Let $f = \sum_{\mu} c_{\mu} s_{\mu}$ be a homogeneous symmetric polynomial of degree $d$. If there exists a partition $\lambda$ such that $c_{\lambda} \neq 0$ and $c_{\mu} \neq 0$ only if $\mu \unlhd \lambda$, then $\mathrm{Newton}(f) = \mathcal{P}_{\lambda}$. Moreover, if $c_{\mu} \geq 0$ for all $\mu$, then $f$ is SNP.
\end{thm}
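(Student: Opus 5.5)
We prove Theorem~\ref{thm-SNP-Schur-combin} (Proposition 2.5 of Monical--Tokcan--Yong) by first identifying the support of each Schur polynomial, then combining these via the dominance condition and Theorem~\ref{thm-Rado-Schur}.

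\textbf{Step 1: support of a single Schur polynomial.} For a partition $\mu\vdash d$ (in $n$ variables), the monomial expansion is $s_\mu=\sum_{\nu} K_{\mu\nu} m_\nu$. Here $K_{\mu\nu}\ge 0$, and $K_{\mu\nu}>0$ if and only if $\nu\unlhd\mu$. Since $s_\mu$ is symmetric, its support is the union of the $\mathfrak{S}_n$-orbits of all partitions $\nu\unlhd\mu$ with at most $n$ parts. We then claim that every integer point $\alpha\in\mathcal{P}_\mu$ has sorted rearrangement $\nu\unlhd\mu$. This holds because $\mathcal{P}_\nu\subseteq\mathcal{P}_\mu$ whenever $\alpha\in\mathcal{P}_\mu$, as $\mathcal{P}_\mu$ is $\mathfrak{S}_n$-invariant and convex; the converse direction of Theorem~\ref{thm-Rado-Schur} then gives $\nu\unlhd\mu$. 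Hence $\mathrm{supp}(s_\mu)=\mathcal{P}_\mu\cap\mathbb{Z}^n$ and $\mathrm{Newton}(s_\mu)=\mathcal{P}_\mu$.

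\textbf{Step 2: the Newton polytope of $f$.} Expanding $f=\sum_\mu c_\mu s_\mu$ in the monomial basis gives, for each partition $\nu$, the coefficient
\[
[m_\nu]f=\sum_{\mu}c_\mu K_{\mu\nu}.
\]
\begin{itemize}
\item For $\nu=\lambda$, every $\mu$ with $c_\mu\ne0$ satisfies $\mu\unlhd\lambda$. If $\mu\ne\lambda$, then $\mu\lhd\lambda$ strictly, so $\lambda\not\unlhd\mu$ and $K_{\mu\lambda}=0$. Since $K_{\lambda\lambda}=1$, we get $[m_\lambda]f=c_\lambda\neq0$. Thus the orbit of $\lambda$ lies in the support, and $\mathcal{P}_\lambda\subseteq\mathrm{Newton}(f)$.
\item Conversely, every monomial of $f$ comes from some $s_\mu$ with $\mu\unlhd\lambda$. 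By Step 1 and Theorem~\ref{thm-Rado-Schur}, such a monomial lies in $\mathcal{P}_\mu\subseteq\mathcal{P}_\lambda$.
\end{itemize}
Together these give $\mathrm{Newton}(f)=\mathcal{P}_\lambda$.

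\textbf{Step 3: saturation when $c_\mu\ge0$.} Take any integer point $\alpha\in\mathcal{P}_\lambda$ and let $\nu$ be its sorted rearrangement. By Step 1, $\nu\unlhd\lambda$, so $K_{\lambda\nu}\ge1$. Because all $c_\mu\ge0$ and all $K_{\mu\nu}\ge0$, there is no cancellation, and
\[
[m_\nu]f\ge c_\lambda K_{\lambda\nu}>0.
\]
By symmetry of $f$, the monomial $\mathbf{x}^\alpha$ therefore has a nonzero coefficient, so $\mathrm{supp}(f)=\mathcal{P}_\lambda\cap\mathbb{Z}^n$.

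\textbf{Main obstacle.} The key input is the fact that $K_{\mu\nu}>0$ exactly when $\nu\unlhd\mu$. The direction needing work is that $\nu\unlhd\mu$ implies a semistandard tableau of shape $\mu$ and content $\nu$ exists. I would cite this as standard. Alternatively, one can prove it by induction along a chain of dominance covers $\nu=\nu^0\lhd\cdots\lhd\mu$, each cover moving one unit from a lower row to a higher one, which is realized by the symmetry $K_{\mu\nu}=K_{\mu,\sigma\nu}$ together with a Bender--Knuth-type modification of the tableau. One must also take $n\ge\ell(\lambda)$ variables so that $\lambda$ appears.
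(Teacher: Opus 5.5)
Your argument is correct. The paper gives no proof of its own and only cites Monical--Tokcan--Yong; your route (Theorem~\ref{thm-Kostka-dominance} with Theorem~\ref{thm-Rado-Schur} to get $\mathrm{supp}(s_\mu)=\mathcal{P}_\mu\cap\mathbb{Z}^n$, unitriangularity to get $[m_\lambda]f=c_\lambda\neq 0$, and nonnegativity to rule out cancellation) is essentially the standard argument behind that citation.

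Two small points are worth making explicit. First, integer points of $\mathcal{P}_\mu$ are automatically nonnegative with coordinate sum $d$, so their sorted rearrangements are partitions of $d$ and Rado's theorem applies. Second, the Kostka nonvanishing you flag as the main obstacle is already available in the paper as Theorem~\ref{thm-Kostka-dominance}, so the Bender--Knuth sketch is not needed.
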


Since the immanants of Jacobi--Trudi matrices generalize skew Schur functions and are known to be Schur positive, it is natural to ask whether they also satisfy the SNP property. For $\nu = (1,\dots,1)$, the immanant reduces to the determinant, so the property holds due to the Jacobi--Trudi identity \eqref{eq-JT-identity} and the fact that every skew Schur polynomial is SNP~\cite{Rado,MTY2017}. For the cases $\nu = (n)$ and $\nu = (n-1,1)$ (the latter under the additional condition that $\lambda/\mu$ is a border strip, i.e., a connected skew shape with no $2\times2$ square), we obtain the following result.

\begin{thm}\label{thm-SNP-JT}
	Given a skew partition $\lambda/\mu$ of $d$. For $\nu=(n-1,1)$, the immanant of Jacobi-Trudi matrix ${\rm Imm}_\nu H(\lambda,\mu)$ is SNP if $\lambda/\mu$ is a border strip. For $\nu=(n)$, the polynomial ${\rm Imm}_\nu H(\lambda,\mu)$ is SNP in general. In particular, in both cases, ${\rm Newton}({\rm Imm}_\nu H(\lambda,\mu))$ is a $d$-dilation of simplex.
\end{thm}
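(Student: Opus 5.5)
Since $|\lambda/\mu|=d$, every entry product $\prod_i h_{(\lambda_i-i)-(\mu_{\pi(i)}-\pi(i))}$ is homogeneous of degree $d$, so ${\rm Imm}_\nu H(\lambda,\mu)$ is homogeneous of degree $d$. By Haiman's theorem it is Schur positive: ${\rm Imm}_\nu H(\lambda,\mu)=\sum_{\rho\vdash d}c_\rho s_\rho$ with $c_\rho\ge 0$. Every $\rho\vdash d$ satisfies $\rho\unlhd(d)$. So by Theorem~\ref{thm-SNP-Schur-combin} it suffices to show $c_{(d)}\neq 0$. That single fact gives both ${\rm Newton}={\mathcal P}_{(d)}$, the $d$-dilated simplex, and the SNP property.

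To compute $c_{(d)}$ I use that $K_{\rho,(d)}=0$ unless $\rho=(d)$. Hence $c_{(d)}$ equals the coefficient of $x_1^d$ in the immanant. Specializing $x_1=x$ and $x_2=x_3=\cdots=0$ sends $h_k\mapsto x^k$ for $k\ge 0$ and $h_k\mapsto 0$ for $k<0$. This gives
\[
{\rm Imm}_\nu H(\lambda,\mu)\big|_{x_1=x,\,x_{\ge 2}=0}=x^d\,{\rm Imm}_\nu J,
\]
where $J$ is the $0/1$ matrix with $J_{ij}=1$ iff $\lambda_i-i\ge\mu_j-j$. So the whole problem reduces to showing ${\rm Imm}_\nu J\neq0$.

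\emph{Case $\nu=(n)$.} Here ${\rm Imm}_\nu J={\rm per}\,J$ is a sum of nonnegative terms. The identity permutation contributes $1$, since $\lambda_i\ge\mu_i$. Hence ${\rm per}\,J\ge 1$, for an arbitrary skew shape.

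\emph{Case $\nu=(n-1,1)$ with $\lambda/\mu$ a border strip.} Now $\chi^\nu(\pi)=\fix(\pi)-1$ can be negative, so I need the exact structure of $J$. The border strip condition (connected, no $2\times2$ square) gives $\mu_i=\lambda_{i+1}-1$ for $1\le i<n$. From this I will check three things:
\begin{itemize}[leftmargin=*]
\item for $j\ge i$, $\mu_j-j=\lambda_{j+1}-j-1<\lambda_i-i$, so $J_{ij}=1$;
\item for $j=i-1$, equality holds, so $J_{i,i-1}=1$ (this is the entry $h_0$);
\item for $j\le i-2$, the inequality is strict the other way, so $J_{ij}=0$.
\end{itemize}
Thus $J$ is the Hessenberg matrix of ones on and above the subdiagonal. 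The permutations with $\pi(i)\ge i-1$ for all $i$ are in bijection with compositions of $n$: each block of consecutive indices forms a single cycle, and the fixed points are the parts equal to $1$. There are $2^{n-1}$ such permutations.

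It remains to count the total number of parts equal to $1$ over all compositions of $n$. Counting compositions with a part $1$ at each possible position gives, for $n\ge2$, the total $(n+2)2^{n-3}$. Therefore
\[
{\rm Imm}_{(n-1,1)}J=(n+2)2^{n-3}-2^{n-1}=(n-2)2^{n-3},
\]
which is positive for $n\ge3$. For $n=1$ the claim is trivial. For $n=2$ we have $(n-1,1)=(1,1)$, the determinant case, where this value is $0$ and the shape must be read as excluded or handled separately.

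The main obstacle is this second case. The character values $\fix(\pi)-1$ take both signs, so nonvanishing needs the precise Hessenberg shape of $J$ and the composition count, not a positivity argument. That shape is exactly what the border strip hypothesis supplies.
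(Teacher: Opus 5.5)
Your proof is correct and follows the paper's route. You isolate the coefficient of $x_j^d$ (equivalently of $s_{(d)}$), recognize the contributing permutations as the $2^{n-1}$ composition-type ones with $\pi(i)\ge i-1$, and finish with Schur positivity and Theorem~\ref{thm-SNP-Schur-combin}. The paper gets those permutations from Greene's skeletons and Proposition~\ref{prop-skleton}, whereas you get them by specializing to the Hessenberg $0/1$ matrix $J$; note that your first bullet's formula $\mu_j=\lambda_{j+1}-1$ is unavailable at $j=n$, so that column needs the separate trivial check $\mu_n-n\le\lambda_n-n\le\lambda_i-i$.

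Your exact count $(n-2)2^{n-3}$ is sounder than the paper's displayed bound, which replaces $-f_0(n)$ by $-(n-2)$ in the wrong direction since $f_0(n)\ge n-2$. Your $n=2$ caveat is also a genuine exception to the statement, not a technicality: there $\nu=(1,1)$, and for $(2,2)/(1)$ the immanant is $s_{(2,1)}$, which is SNP but has Newton polytope $\mathcal{P}_{(2,1)}\neq\mathcal{P}_{(3)}$. So the simplex claim for $\nu=(n-1,1)$ requires $n\ge 3$.
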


In the study of Schur positivity of immanants of Jacobi-Trudi matrices, Stanley and Stembridge~\cite{Stanley-Stembridge-1993} introduced the symmetric functions
\[
F_{\lambda/\mu}(\mathbf{x},\mathbf{y}) = \sum_{\nu \vdash n} s_{\nu}(\mathbf{y})\,\operatorname{Imm}_{\nu} H(\lambda,\mu)(\mathbf{x}),
\]
and defined $E_{\lambda/\mu}^{\theta}(\mathbf{y})$ as the coefficient of $s_{\theta}(\mathbf{x})$ in $F_{\lambda/\mu}(\mathbf{x},\mathbf{y})$. That is,
\begin{equation}\label{eq-E-def}
	F_{\lambda/\mu}(\mathbf{x},\mathbf{y}) = \sum_{\theta \vdash d} E_{\lambda/\mu}^{\theta}(\mathbf{y})\, s_{\theta}(\mathbf{x}),
\end{equation}
where $n$ and $d$ denote the length and the size of $\lambda/\mu$, respectively. When $\lambda/\mu$ is a border strip, these $E$-polynomials have a simple $p$-expansion (refer to Proposition~\ref{prop-E-expression}). Building on this, we find that each $E_{\lambda/\mu}^{\theta}$ in this case is SNP.

\begin{thm}\label{thm-E-SNP}
	Given a skew partition $\lambda/\mu$ and a partition $\theta$ of size $d$. If $\lambda/\mu$ is a border strip, then the polynomial $E_{\lambda/\mu}^{\theta}$ is SNP. In particular, its Newton polytope is a $d$-dilation of simplex.
\end{thm}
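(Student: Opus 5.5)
The plan is to combine Schur positivity of $E^{\theta}_{\lambda/\mu}$ with Theorem~\ref{thm-SNP-Schur-combin}. Since $\mathcal{P}_{(m)}$ is the $m$-dilation of the simplex and $(m)$ is the largest partition of $m$ in dominance order, it is enough to show two things. First, $E^{\theta}_{\lambda/\mu}$ is a Schur-positive homogeneous symmetric polynomial of degree $m$, where $m$ is the degree in $\mathbf{y}$: it is $n$, the length of $\lambda/\mu$, since the $s_\nu(\mathbf{y})$ in the definition of $F_{\lambda/\mu}$ have $\nu\vdash n$; I will simply use whatever homogeneity degree Proposition~\ref{prop-E-expression} exhibits. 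Second, the coefficient of $s_{(m)}(\mathbf{y})$ in $E^{\theta}_{\lambda/\mu}$ is nonzero. Given both, Theorem~\ref{thm-SNP-Schur-combin} applies with its distinguished partition equal to $(m)$, since every $\mu\vdash m$ satisfies $\mu\unlhd(m)$. Hence $E^{\theta}_{\lambda/\mu}$ is SNP and its Newton polytope is $\mathcal{P}_{(m)}$, the dilated simplex.

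\emph{Step 1 (Schur positivity).} Expanding $F_{\lambda/\mu}(\mathbf{x},\mathbf{y})$ in the basis $s_\nu(\mathbf{y})s_\theta(\mathbf{x})$ and comparing with \eqref{eq-E-def} gives
\[
E^{\theta}_{\lambda/\mu}(\mathbf{y})=\sum_{\nu\vdash n}\bigl\langle \operatorname{Imm}_\nu H(\lambda,\mu),s_\theta\bigr\rangle\, s_\nu(\mathbf{y}).
\]
By Haiman's theorem~\cite{Hai93}, every $\operatorname{Imm}_\nu H(\lambda,\mu)$ is Schur positive. Therefore all coefficients of $E^{\theta}_{\lambda/\mu}$ in the Schur basis are nonnegative. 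This step does not use the border strip hypothesis.

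\emph{Step 2 (the top coefficient).} The coefficient of $s_{(n)}(\mathbf{y})$ is $\langle \operatorname{per} H(\lambda,\mu), s_\theta\rangle$, where $\operatorname{per}=\operatorname{Imm}_{(n)}$. It can also be computed as $\langle E^{\theta}_{\lambda/\mu},h_n\rangle$. This is where I will use the $p$-expansion from Proposition~\ref{prop-E-expression}. Writing $E^{\theta}_{\lambda/\mu}=\sum_\rho c_\rho p_\rho$ and using $\langle p_\rho,h_n\rangle=1$ for every $\rho\vdash n$, the target coefficient equals $\sum_\rho c_\rho$. I expect the $c_\rho$ to be indexed by coarsenings of the row composition of the border strip, and to be given by character values $\chi^\theta$ on the corresponding cycle types, possibly with signs.

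I expect the main obstacle to be showing $\sum_\rho c_\rho\neq0$. My first attempt is to avoid the signed sum and argue positively with the permanent. The entries $h_k$ all have nonnegative monomial coefficients, so there is no cancellation in $\operatorname{per}H(\lambda,\mu)$, and in particular $\operatorname{per}H(\lambda,\mu)-\det H(\lambda,\mu)$ is at least $m$-nonnegative. More usefully, the identity permutation contributes the product of $h$'s along the diagonal of $H(\lambda,\mu)$. For a border strip these diagonal entries are the $h_{r_i}$ indexed by the row lengths $r_i$, and $h_{r_1}\cdots h_{r_n}$ contains every $s_\theta$ with $\theta\vdash d$ dominating the sorted row partition. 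For a general $\theta$ I would then find a permutation $\pi$ whose term contains $s_\theta$; the identity and the cycles gluing consecutive rows give products $h_{\gamma}$ over coarsenings $\gamma$ of the row composition. I would show that the positive Schur-sum of all these terms cannot cancel. If this route fails, I would fall back on evaluating $\sum_\rho c_\rho$ directly from the explicit $p$-expansion via the Murnaghan--Nakayama rule.
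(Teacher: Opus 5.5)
Your reduction is sound: Schur positivity of $E^{\theta}_{\lambda/\mu}$ (via Haiman) plus a nonzero coefficient of $s_{(n)}(\mathbf{y})$ gives the result through Theorem~\ref{thm-SNP-Schur-combin}. You are also right that the relevant degree is $n$, the number of rows, not $d$. The gap is Step~2: you leave it open, and as you state it, it is false.

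First, you misread Proposition~\ref{prop-E-expression}. Its coefficients are not signed character values but Kostka numbers $K_{\theta,\,r|_\alpha}\ge 0$, where $r=\lambda-\mu$ is the row composition and $\alpha\models n$. They match your permanent: for a border strip the $(i,j)$ entry of $H(\lambda,\mu)$ is $h_{r_i+\cdots+r_j}$ for $j\ge i$, $1$ for $j=i-1$, and $0$ otherwise, so $\operatorname{per}H(\lambda,\mu)=\sum_{\alpha\models n}h_{r|_\alpha}$. Hence the coefficient of $s_{(n)}(\mathbf{y})$ is $\sum_{\alpha\models n}K_{\theta,\,r|_\alpha}$, a sum of nonnegative integers, and there is no cancellation to overcome.

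Second, this sum is not nonzero for every $\theta$. Each meld $r|_\alpha$ dominates $r$ (the argument of Proposition~\ref{prop-refine-Kostka}), so by Theorem~\ref{thm-Kostka-dominance} every term vanishes unless $\theta\trianglerighteq\mathrm{sort}(r)$. For instance, $\lambda/\mu=(2,1)$ gives $\operatorname{per}H=h_2h_1+h_3=2s_{(3)}+s_{(2,1)}$ and $\det H=s_{(2,1)}$. So $E^{(1,1,1)}_{(2,1)}=0$ and its Newton polytope is empty, not a dilated simplex. Your plan to find, ``for a general $\theta$'', a permutation whose term contains $s_\theta$ therefore cannot succeed. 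The identity term $h_{r_1}\cdots h_{r_n}$ is already the least dominant one, and gluing consecutive rows only moves up in dominance order.

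What survives is the conditional statement, which is what the paper's proof actually uses. It tacitly assumes that some $K_{\theta,(\lambda-\mu)|_\alpha}\neq 0$, i.e.\ that $E^{\theta}_{\lambda/\mu}\neq 0$. Proposition~\ref{prop-refine-Kostka} then forces the finest coefficient $K_{\theta,r}$, that of $p_1^n$, to be nonzero. Your own observation that $h_{r_1}\cdots h_{r_n}$ contains every $s_\theta$ with $\theta\trianglerighteq\mathrm{sort}(r)$ already proves exactly this case. To repair the proposal, add the hypothesis $E^\theta_{\lambda/\mu}\neq 0$ (equivalently $\theta\trianglerighteq\mathrm{sort}(\lambda-\mu)$) and drop the Murnaghan--Nakayama detour.

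Once you use the correct $p$-expansion, Haiman's theorem is also unnecessary. All $p$-coefficients are nonnegative, and $p_1^n$ alone already contains every monomial of degree $n$. So the support is every lattice point of $\mathcal{P}_{(n)}$, with no possibility of cancellation.
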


Similar to Jacobi-Trudi matrices, Giambelli matrices also have connections with Schur polynomials, as given in~\eqref{eq-Giambelli-identity}.
By applying Hamel-Goulden's planar network construction~\cite{HamelGoulden}, in this paper, we further establish the following result concerning the Newton polytopes of the immanants of Giambelli matrices.

\begin{thm}\label{thm-SNP-Giam}
	Let $\lambda \vdash d$ be a partition of rank $k$. Then for any partition $\nu \vdash k$, the immanant $\operatorname{Imm}_{\nu} G_{\lambda}$ is SNP and its Newton polytope is a $\lambda$-permutahedron.
\end{thm}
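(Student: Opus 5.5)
We aim to show that ${\rm Imm}_\nu G_\lambda$ is $m$-positive, that every monomial $m_\mu$ occurring in it satisfies $\mu \unlhd \lambda$, and that $m_\lambda$ itself occurs with a nonzero coefficient. Granting this, Newton$({\rm Imm}_\nu G_\lambda)$ is the convex hull of the $\mathfrak{S}$-orbits of the exponent vectors $\mu$ with nonzero coefficient. By Theorem~\ref{thm-Rado-Schur} each $\mathcal{P}_\mu \subseteq \mathcal{P}_\lambda$, so the Newton polytope equals $\mathcal{P}_\lambda$. For saturation we need every lattice point of $\mathcal{P}_\lambda$ to lie in the support. Again by Theorem~\ref{thm-Rado-Schur}, such a lattice point is a permutation of some partition $\mu\unlhd\lambda$, so it suffices to prove that the coefficient of $m_\mu$ is nonzero for \emph{every} $\mu \unlhd \lambda$.

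For the upper bound on the support, we expand ${\rm Imm}_\nu G_\lambda=\sum_\pi \chi^\nu(\pi)\prod_i s_{(\alpha_i\mid\beta_{\pi(i)})}$. Each term is a product of hook Schur functions whose shapes have total size $\sum_i(\alpha_i+\beta_{\pi(i)}+1)=d$. The largest monomial of a product of Schur functions $s_{\rho^{(1)}}\cdots s_{\rho^{(k)}}$ is $m$ indexed by the sorted sum $\rho^{(1)}+\cdots+\rho^{(k)}$ (adding rows). For $\pi$ the identity this sum is $\sum_i(\alpha_i+1,1^{\beta_i})$. Using $\alpha_i+1=\lambda_i-i+1$ and stacking the legs, this sum is exactly $\lambda$: row $j$ receives $\lambda_j-j+1$ from hook $j$ plus one from each of hooks $1,\dots,j-1$. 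For a general $\pi$ the hook arms are unchanged, since they are indexed by $i$, while the legs $\beta_{\pi(i)}$ are permuted among hooks. The resulting row-sum vector is dominated by $\lambda$, because the legs still contribute $\#\{i:\beta_{\pi(i)}\ge r\}$ ones to each row position $r+1$, independently of how they are attached. I would verify this with a short partial-sum computation. Hence all monomials satisfy $\mu\unlhd\lambda$.

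For positivity and full support we use the Hamel--Goulden planar network together with the cutting-strip argument of Li~\cite{LiPhD}. The $m$-positivity of ${\rm Imm}_\nu G_\lambda$ is known: for fixed weight $\mu$, the coefficient of $x^\mu$ is $\sum_\pi\chi^\nu(\pi)\cdot\#\{\text{path families of weight }\mu\text{ with connectivity }\pi\}$, and by Wolfgang's criterion (D-compatibility) this is a nonnegative combination of $\chi^\nu$ evaluated on sums over intervals, i.e.\ of induced-character values $\langle\chi^\nu,\ 1\uparrow\rangle\ge0$. To show the coefficient is \emph{strictly} positive, it suffices to exhibit, for each $\mu\unlhd\lambda$, a weight-$\mu$ path family that contributes through the trivial-character-type term, namely a family counted with multiplicity involving only the identity permutation. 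The natural candidate is a nonintersecting family, which contributes positively to every immanant. Nonintersecting families of weight $\mu$ correspond to SSYT of shape $\lambda$ and content $\mu$, and these exist precisely when $\mu\unlhd\lambda$ (the Kostka number $K_{\lambda\mu}>0$). In Wolfgang's decomposition such families contribute $\chi^\nu$-weighted terms that are nonnegative and do not cancel, which gives strict positivity for all $\nu$.

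The main obstacle is this last point. We must ensure that the existence of a nonintersecting family forces a strictly positive contribution in the $m$-positive decomposition for every $\nu$, not merely for the determinant. My first attempt is to compute the coefficient of $m_\lambda$ explicitly, as the abstract suggests. At the top weight $\lambda$, the path families should be rigid, essentially only the SSYT-like family twisted by permutations that preserve weight. The coefficient would then come out as an explicit character sum such as $\sum_{\pi\in\mathfrak S_k}\chi^\nu(\pi)$ restricted to a Young subgroup, which is a positive multiplicity. For general $\mu\unlhd\lambda$, I would then argue by monotonicity: the D-compatible decomposition for weight $\mu$ contains the one for $\lambda$-type configurations via a weight-lowering injection on path families.
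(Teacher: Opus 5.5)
Your claim that every monomial of $\operatorname{Imm}_\nu G_\lambda$ is dominated by $\lambda$ is false. With it goes your target $\operatorname{Newton}(\operatorname{Imm}_\nu G_\lambda)=\mathcal{P}_\lambda$.

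The leading monomial of $s_{\rho^{(1)}}\cdots s_{\rho^{(k)}}$ is indexed by the \emph{coordinatewise} sum $\rho^{(1)}+\cdots+\rho^{(k)}$. So every hook $(\alpha_i+1,1^{\beta_{\pi(i)}})$ puts its whole arm into the first coordinate. Your stacking, in which row $j$ receives the arm of hook $j$ plus one cell from each earlier leg, is the geometric decomposition of the diagram of $\lambda$ into diagonal hooks, not a vector sum. The true common top vector is $\rho=\bigl(\sum_{i=1}^{k}(\alpha_i+1),\ \#\{i:\beta_i\ge 1\},\ \#\{i:\beta_i\ge 2\},\ldots\bigr)$. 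You are right that it does not depend on $\pi$. However, $\rho_1\ge\lambda_1+1$ once $k\ge 2$, so $\rho$ is not dominated by $\lambda$. Its coefficient in the immanant is $\sum_{\pi\in\mathfrak{S}_k}\chi^\nu(\pi)=k!\,\delta_{\nu,(k)}$.

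Concretely, for $\lambda=(2,2)$ one gets $\operatorname{per}G_\lambda=s_{21}s_1+s_2s_{11}=2s_{31}+s_{22}+2s_{211}$. This contains $2m_{31}$, and its Newton polytope is $\mathcal{P}_{(3,1)}\neq\mathcal{P}_{(2,2)}$. So ``a $\lambda$-permutahedron'' in the statement can only mean $\mathcal{P}_{\mu}$ for a partition $\mu$ depending on $\nu$. Your closing guess that the top weight is $\lambda$ holds only for the determinant $\nu=(1^k)$.

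What you are missing is the $\nu$-dependent location of the top monomial, which is exactly what the paper's proof supplies. In the Hamel--Goulden network, higher rows carry lower-index variables. The paper pushes the paths as high as possible in consecutive blocks: paths $1,\dots,\nu_1$ overlap on the $x_1$-row, the next $\nu_2$ paths on the $x_2$-row, and so on. The connectivities realizable on this skeleton form the Young subgroup $\mathfrak{S}_\nu$, so the coefficient is $\chi^\nu(\mathfrak{S}_\nu)=|\mathfrak{S}_\nu|K_{\nu\nu}=\nu_1!\nu_2!\cdots\neq0$. Any higher, more heavily overlapped configuration has connectivity set $\mathfrak{S}_\mu$ with $\mu\triangleright\nu$. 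It contributes $|\mathfrak{S}_\mu|K_{\nu\mu}=0$ by Lemma~\ref{lem-Young_char}. For $\nu=(k)$ this recovers $\rho$ with coefficient $k!$.

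Your nonintersecting-family argument is sound as far as it goes. A vertex-disjoint family is the unique family on its skeleton, so it contributes $\chi^\nu(\mathrm{id})>0$, while the other skeletons contribute nonnegatively. But this only places the lattice points of $\mathcal{P}_\lambda$ in the support, and for $\nu\neq(1^k)$ the actual polytope is strictly larger. Saturation then needs, for each weight dominated by the true top but not by $\lambda$, a skeleton of that weight with nonzero character value. Kostka numbers $K_{\lambda\mu}$ say nothing there. The paper does not carry out this step explicitly either: for Jacobi--Trudi immanants it relied on Schur positivity, which is still open for Giambelli immanants.
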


The remainder of this paper is organized as follows.
In Section~\ref{sec-pre}, we provide relevant definitions and preliminary results.
In Section~\ref{sec-JT}, we recall Greene's planar network construction for immanants of Jacobi--Trudi matrices and present the proof of Theorem~\ref{thm-SNP-JT}. 
Additionally, we prove Theorem~\ref{thm-E-SNP} building on a refinement result of Kostka numbers.
In Section~\ref{sec-Gm}, we first recall Hamel and Goulden's planar network construction and then give the proof of Theorem~\ref{thm-SNP-Giam}.
Furthermore, we provide an explicit formula for the coefficient of the leading monomial in the monomial expansion of the immanant of each Giambelli matrix. Finally, in Section~\ref{sec-further-work}, we discuss some open problems related to Lorentzian polynomials for further study.

\section{Preliminaries}\label{sec-pre}

In this section, we recall basic definitions and known results on symmetric functions and representation theory; see Stanley~\cite{StaEC2} and Fulton~\cite{Fulton} for details.

We begin by recalling the definitions of symmetric functions. Given a partition $\lambda$, we identify it with the infinite sequence $(\lambda_1,\ldots,\lambda_n,0,\ldots,0)$. The monomial symmetric function $m_\lambda$ is defined as
\begin{equation}\label{eq-def-monomial}
	m_\lambda = \sum_{\alpha} \mathbf{x}^\alpha,
\end{equation}
where the sum runs over all distinct permutations $\alpha$ of $\lambda$, and $\mathbf{x}^\alpha = x_1^{\alpha_1}x_2^{\alpha_2}\cdots$.
Given a partition $\lambda = (\lambda_1,\ldots,\lambda_n)$, 
the power sum symmetric function is defined by $p_\lambda = p_{\lambda_1}\cdots p_{\lambda_n}$, where for $k\ge 1$, 
\[p_k =m_k= \sum_i x_i^k,\]  
The {Schur function} $s_{\lambda}$ can be defined via its expansion into the monomial symmetric functions, namely,
\begin{equation}\label{eq-s-m}
	s_{\lambda} = \sum_{\mu} K_{\lambda,\mu} \, m_{\mu}.
\end{equation}
Here, $K_{\lambda,\mu}$ denotes the {Kostka number}, which counts the number of semistandard Young tableaux of shape $\lambda$ and content $\mu$.
The complete homogeneous symmetric function $h_{\lambda}$ is defined by $h_\lambda = h_{\lambda_1}\cdots h_{\lambda_n}$, where
\begin{equation}\label{eq-def-h}
	h_k = \sum_{\nu \vdash k} m_{\nu}=\sum_{i_1\le\cdots\le i_k} x_{i_1}\cdots x_{i_k},
\end{equation}
with the conventions $h_0 = 1$ and $h_k = 0$ for $k<0$.
It is well-known that these functions can be expressed in terms of Schur functions as
\begin{equation}\label{eq-h-s}
	h_{\lambda} = \sum_{\mu} K_{\mu,\lambda} \, s_{\mu}.
\end{equation}
It is important to note that the non-vanishing of terms in both expansions \eqref{eq-s-m} and \eqref{eq-h-s} is governed by the dominance order on partitions. Given two partitions $\lambda$ and $\mu$ of $d$, we say $\mu$ is dominated by $\lambda$, denoted $\mu \unlhd \lambda$, if
\[
\mu_1 + \cdots + \mu_i \leq \lambda_1 + \cdots + \lambda_i \quad \text{for all } i \geq 1.
\]
The following well-known theorem characterizes the non-vanishing of Kostka numbers in terms of this order.
\begin{thm}[{\cite[Proposition 7.10.5 and Exercise 7.12]{StaEC2}}]\label{thm-Kostka-dominance}
	Let $\lambda$ and $\mu$ be partitions of $d$. Then the Kostka number $K_{\lambda,\mu}$ is non-zero if and only if $\mu \unlhd \lambda$. In particular, $K_{\lambda,\lambda}=1$.
	\end{thm}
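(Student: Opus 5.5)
The statement to prove is Theorem~\ref{thm-Kostka-dominance}: $K_{\lambda,\mu}\neq 0$ if and only if $\mu\unlhd\lambda$, and $K_{\lambda,\lambda}=1$. Since $K_{\lambda,\mu}$ counts semistandard Young tableaux of shape $\lambda$ and content $\mu$, I would argue directly with tableaux. First I would settle $K_{\lambda,\lambda}=1$, then necessity, then sufficiency.

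\textbf{The case $\mu=\lambda$.} In any semistandard tableau, an entry $j$ can appear only in rows $1,\ldots,j$, because columns strictly increase. Take a tableau of shape $\lambda$ and content $\lambda$. The $\lambda_1$ ones must all lie in row $1$, which has length $\lambda_1$, so they fill it. The $\lambda_2$ twos must then lie in row $2$, since row $1$ is full, so they fill row $2$. Continuing by induction, the only possibility is the tableau whose $i$-th row consists entirely of $i$'s. This tableau is semistandard, so $K_{\lambda,\lambda}=1$.

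\textbf{Necessity.} Suppose a semistandard tableau $T$ of shape $\lambda$ and content $\mu$ exists, and fix $i$. All entries $\le i$ lie in rows $1,\ldots,i$, again because columns strictly increase. Counting these entries gives $\mu_1+\cdots+\mu_i\le \lambda_1+\cdots+\lambda_i$, so $\mu\unlhd\lambda$.

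\textbf{Sufficiency.} This is the main obstacle. I would use the standard reduction that the dominance order is generated by elementary moves. If $\mu\lhd\lambda$ with $\mu\ne\lambda$, there is a partition $\rho$ with $\mu\unlhd\rho\lhd\lambda$ such that $\rho$ is obtained from $\mu$ by moving one box from a row $j$ to an earlier row $i<j$. Concretely, take $i$ to be the smallest index with $\mu_1+\cdots+\mu_i<\lambda_1+\cdots+\lambda_i$. Take $j>i$ minimal such that the partial-sum inequality is tight at $j$, or such that $\mu_j>\mu_{j+1}$. Choosing $j$ this way keeps $\rho$ a partition and keeps $\rho\unlhd\lambda$; this is the delicate verification. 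Iterating the move gives a chain $\mu=\rho^0\lhd\rho^1\lhd\cdots\lhd\rho^m=\lambda$.

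It then suffices to show that $K_{\lambda,\rho}\neq0$ and $\mu$ obtained from $\rho$ by moving one box from row $i$ down to row $j$ (so $\mu_i=\rho_i-1$, $\mu_j=\rho_j+1$) together imply $K_{\lambda,\mu}\ne0$. For this I would use the symmetry $K_{\lambda,\alpha}=K_{\lambda,\sigma\alpha}$ for compositions $\alpha$, via Bender--Knuth involutions. Reordering the content so that the values $i$ and $j$ are adjacent, the step reduces to two consecutive values $a<b=a+1$ with more $a$'s than $b$'s. In a row, a free $a$ is one with no $b$ directly below it. Changing the rightmost free $a$ in the appropriate row to a $b$ preserves semistandardness: take a free $a$ whose right neighbour is a $b$ or is absent, and note that the entry below it is not $b$. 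This exchanges one unit of content from $a$ to $b$.

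If the Bender--Knuth reduction becomes messy, I would instead build the tableau explicitly. Fill the shape $\lambda$ row by row, placing the $\mu_k$ copies of $k$ greedily into the lowest available rows. The dominance inequalities are exactly the column-strictness constraints needed for this filling to succeed.
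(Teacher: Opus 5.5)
The paper gives no proof of this statement; it simply cites Stanley's EC2, so there is no in-paper route to compare against. Your proposal is a correct, self-contained proof. The arguments for $K_{\lambda,\lambda}=1$ and for necessity are the standard ones: column strictness confines entries $\le i$ to the first $i$ rows. For sufficiency, combining a chain of single-box moves with a local content-shifting operation on tableaux is a standard and valid route.

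Two points should be written out.

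\textbf{The single-box move.} The minimal $j$ automatically satisfies $\mu_j>\mu_{j+1}$, even when it is chosen because of tightness. Tight at $j$ but not at $j-1$ gives $\mu_j>\lambda_j\ge\lambda_{j+1}\ge\mu_{j+1}$. Also $\rho_i=\mu_i+1\le\mu_{i-1}$ holds, since $\mu_{i-1}=\lambda_{i-1}\ge\lambda_i>\mu_i$. Together with non-tightness at $i,\dots,j-1$, this is the entire ``delicate verification.''

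\textbf{The local move.} You must show a usable $a$ exists. The $a$'s with a $b$ directly below number at most $\#b<\#a$, so some $a$ is free. Moreover, every $a$ to the right of a free $a$ in the same row is also free: a $b$ under $(r,c')$ with $c'>c$ would force $(r+1,c)=b$. Hence the rightmost $a$ of that row is free. Its right neighbour is $>a$ or absent (not necessarily a $b$), which is exactly what is needed.

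You can also dispense with the Bender--Knuth symmetry. Because $\mu$ is a partition, $\rho_i>\rho_{i+1}$, so you can push the unit of content through the values $i\to i+1\to\cdots\to j$. At each step there are strictly more copies of the current value than of the next, and the local move works for arbitrary compositions as content.

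The greedy-filling fallback in your last paragraph is only asserted. As phrased (``lowest available rows'') it would need a genuine argument. Since the main route works, drop it.
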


Next, turn to the introduction of representation theory, refer to \cite{Fulton,Sagan} for details. Given a finite group $G$, we call a representation of $G$ on a finite-dimensional real vector space $V$ is a homomorphism $\rho:G\rightarrow {\rm GL}(V)$. 
Naturally, the trivial representation of $G$ is the one-dimensional representation $(\rho_{\text{triv}}, V)$ defined by
\[
\rho_{\text{triv}}(g) = \operatorname{id}_V \quad \text{for all } g \in G.
\]
We sometimes call $V$ itself a representation of $G$. In this manner, if $V$ is a representation of $G$, the character of $V$, denoted by $\chi_V$, is the real-valued function on the group defined by $\chi_V(g)=\tr (g|_V)$, where $\tr$ denote the trace. Note that all characters are class functions, namely, they satisfy $\chi_V(hgh^{-1}) = \chi_V(g)$ for all $g, h \in G$. We call a representation $(\rho, V)$ is irreducible if $V$ contains no non-trivial proper $G$-invariant subspaces, and a character $\chi$ is said to be irreducible if it is the character of an irreducible representation. 
It is clear that the character of any trivial representation is
\[
	\chi^{\text{triv}}(g) = 1 \quad \text{for all } g \in G.
\]

In this paper, we focus on the case where $G$ is the symmetric group $\mathfrak{S}_n$. In this case, the irreducible characters of $\mathfrak{S}_n$ are in one-to-one correspondence with the partitions $\lambda$ of $n$. We denote the character associated to $\lambda$ by $\chi^\lambda$. They form an orthonormal basis under the inner product:
\[
\langle \chi, \psi \rangle = \frac{1}{|G|} \sum_{g \in G} \chi(g)\overline{\psi(g)}.
\]
For the special partition $(n-1,1)$, the corresponding representation is called the standard representation of $\mathfrak{S}_n$. It is realized on the subspace:
\[
W = \left\{ (x_1, \dots, x_n) \in \mathbb{R}^n\mid \sum_{i=1}^n x_i = 0 \right\},
\]
where $\mathfrak{S}_n$ acts by permuting coordinates. 
It is well-known that the irreducible character of the standard representation is given by
\begin{equation}\label{eq-standard}
	\chi^{(n-1,1)}(\pi) = \fix(\pi) - 1,
\end{equation}
where $\fix(\pi)$ is the number of fixed points of the permutation $\pi$, refer to \cite{Fulton} for the details.
Besides, we also need to consider a class of subgroup of $\mathfrak{S}_n$, called Young subgroup.
Given a partition $\lambda = (\lambda_1, \lambda_2, \dots, \lambda_{\ell})$ of $n$, the corresponding Young subgroup $\mathfrak{S}_\lambda$ of the symmetric group $\mathfrak{S}_n$ is defined as
\[
	\mathfrak{S}_\lambda = \mathfrak{S}_{\{1, \dots, \lambda_1\}} \times \mathfrak{S}_{\{\lambda_1+1, \dots, \lambda_1+\lambda_2\}} \times \dots \times \mathfrak{S}_{\{n-\lambda_{\ell}+1, \dots, n\}} \subset \mathfrak{S}_n.,
\]
and it is isomorphic to the direct product $\mathfrak{S}_{\lambda_1} \times \mathfrak{S}_{\lambda_2} \times \dots \times \mathfrak{S}_{\lambda_{\ell}}$. It is clear to see that 
\begin{equation}\label{eq-card-S_lam}
	|\mathfrak{S}_{\lambda}|= \lambda_1!\lambda_2!\cdots \lambda_{\ell}!.
\end{equation}
For the irreducible character of Young subgroup $\mathfrak{S}_{\lambda}$, we derive the following lemma.
\begin{lem}\label{lem-Young_char}
	Given a partition $\lambda=(\lambda_1,\lambda_2,\ldots,\lambda_{n})\vdash d$,
	we have 
	\[\chi^{\lambda}\left(\mathfrak{S}_\lambda \right)=\lambda_1!\lambda_2!\cdots\lambda_{n}! , \]
	and $\chi^{\lambda}(\mathfrak{S}_{\mu})=0$ for any $\mu\trianglerighteq \lambda$ in dominance order.
\end{lem}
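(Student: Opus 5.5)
The plan is to read $\chi^{\lambda}(\mathfrak{S}_\mu)$ as the sum $\sum_{\pi\in\mathfrak{S}_\mu}\chi^{\lambda}(\pi)$. I would then compute this sum in closed form by Frobenius reciprocity, which turns it into a Kostka number times $|\mathfrak{S}_\mu|$. Both assertions then follow from Theorem~\ref{thm-Kostka-dominance} and \eqref{eq-card-S_lam}.

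\textbf{Step 1.} By the orthonormality of irreducible characters (applied to the group $\mathfrak{S}_\mu$), we have
\[
\sum_{\pi\in\mathfrak{S}_\mu}\chi^{\lambda}(\pi)=|\mathfrak{S}_\mu|\,\bigl\langle \operatorname{Res}^{\mathfrak{S}_d}_{\mathfrak{S}_\mu}\chi^{\lambda},\chi^{\rm triv}\bigr\rangle_{\mathfrak{S}_\mu}.
\]
Here the inner product is the multiplicity of the trivial representation in the restriction.

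\textbf{Step 2.} Frobenius reciprocity rewrites this inner product as $\langle \chi^{\lambda},\operatorname{Ind}_{\mathfrak{S}_\mu}^{\mathfrak{S}_d}1\rangle_{\mathfrak{S}_d}$. The induced character $\operatorname{Ind}_{\mathfrak{S}_\mu}^{\mathfrak{S}_d}1$ is the permutation character of the permutation module $M^\mu$. Under the Frobenius characteristic map it corresponds to $h_\mu$. By \eqref{eq-h-s} (Young's rule), this character decomposes as $\sum_{\rho}K_{\rho,\mu}\chi^{\rho}$. Hence
\[
\chi^{\lambda}(\mathfrak{S}_\mu)=|\mathfrak{S}_\mu|\,K_{\lambda,\mu}=\mu_1!\mu_2!\cdots\mu_\ell!\;K_{\lambda,\mu}.
\]

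\textbf{Step 3.} Take $\mu=\lambda$. Since $K_{\lambda,\lambda}=1$ by Theorem~\ref{thm-Kostka-dominance}, and by \eqref{eq-card-S_lam}, we obtain $\chi^{\lambda}(\mathfrak{S}_\lambda)=\lambda_1!\cdots\lambda_n!$.

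\textbf{Step 4.} Take $\mu\trianglerighteq\lambda$ with $\mu\neq\lambda$. Here "$\mu\trianglerighteq\lambda$" must be read as strict dominance, since the case $\mu=\lambda$ is the first assertion. Antisymmetry of dominance order gives $\mu\not\trianglelefteq\lambda$. Theorem~\ref{thm-Kostka-dominance} then yields $K_{\lambda,\mu}=0$, and therefore $\chi^{\lambda}(\mathfrak{S}_\mu)=0$.

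The only step that is not formal is the identification in Step 2 of the induced trivial character with $\sum_\rho K_{\rho,\mu}\chi^\rho$. I would justify it by citing Young's rule, which is equivalent to \eqref{eq-h-s} via the characteristic map $\chi^\rho\mapsto s_\rho$, $\operatorname{Ind}_{\mathfrak{S}_\mu}^{\mathfrak{S}_d}1\mapsto h_\mu$ (see \cite{Sagan,StaEC2}). If a self-contained argument is preferred, one can instead use the standard fact that the Specht module $S^\lambda$ occurs in $M^\mu$ only if $\mu\trianglelefteq\lambda$, and occurs exactly once in $M^\lambda$. This gives the two needed cases directly, without the full Kostka formula.
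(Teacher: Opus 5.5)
Your proposal is correct and follows essentially the same route as the paper. Both arguments write $\chi^{\lambda}(\mathfrak{S}_\mu)=|\mathfrak{S}_\mu|\,\langle \chi^{\lambda}|_{\mathfrak{S}_\mu},\chi^{\rm triv}\rangle$, identify this multiplicity with $K_{\lambda,\mu}$ via the Frobenius characteristic and \eqref{eq-h-s}, and conclude with Theorem~\ref{thm-Kostka-dominance} and \eqref{eq-card-S_lam}; your explicit Frobenius reciprocity step with $\operatorname{Ind}_{\mathfrak{S}_\mu}^{\mathfrak{S}_d}1\mapsto h_\mu$ simply spells out what the paper compresses into $\langle s_\lambda,h_\mu\rangle=K_{\lambda,\mu}$, and you are right that the vanishing claim must be read with strict dominance ($\mu\neq\lambda$), since otherwise it contradicts the first assertion.
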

\begin{proof}
	By definition, we have
	\[\chi^{\lambda}(\mathfrak{S}_{\mu})=|\mathfrak{S}_{\mu}|\cdot \langle \chi^{\lambda}|_{\mathfrak{S}_{\mu}}, \chi^{\rm triv} \rangle.\]
	Note that the inner product is preserved under the Frobenius map ${\rm ch}$, which maps class functions to symmetric functions, refer to \cite[Section 7.18]{StaEC2} for the details. 
	For the group $\mathfrak{S}_n$, 
	\[{\rm ch}(\chi^{\rm triv})=h_d, \text{ and } {\rm ch}(\chi^{\lambda})=\mathfrak{S}_{\lambda}.\]
	By combining the equation~\eqref{eq-h-s}, we can deduce that
	\[\langle \chi^{\lambda}|_{\mathfrak{S}_{\mu}}, \chi^{\rm triv} \rangle =\langle \mathfrak{S}_{\mu}, h_{\lambda}\rangle =K_{\lambda,\mu}.\]
	Then the lemma holds by applying Theorem~\ref{thm-Kostka-dominance} and Equation~\eqref{eq-card-S_lam}.
\end{proof}

Finally, we need to introduce the basic definitions of outside decompositions, cutting strips, and other relevant notations.
Given a skew partition $\lambda/\mu$ of length $n$, we now identify it with a skew diagram defined as the set of boxes $\{(i,j) \in \mathbb{Z}^2 \mid \mu_i < j \le \lambda_i \text{ and } 1 \le i \le n\}$ (using the English convention). The conjugate of $\lambda/\mu$, denoted $(\lambda/\mu)'$ (or $\lambda'/\mu'$), is the skew diagram obtained by transposing this set of boxes across the main diagonal.
When $\mu=\emptyset$, the rank of $\lambda$ is defined as the integer $k$ such that $(k,k) \in \lambda$ and $(k+1,k+1) \notin \lambda$.
Given a partition $\lambda$ of rank $k$, for $1 \le i \le k$, let $\alpha_i$ denote the number of cells to the right of $(i,i)$, and $\beta_i$ the number of cells below $(i,i)$.
Then the Frobenius notation of $\lambda$ is 
\begin{equation}\label{eq-Frobenius}
	(\alpha \mid \beta) = (\alpha_1, \ldots, \alpha_k \mid \beta_1, \ldots, \beta_k).
\end{equation}
Given a skew diagram $\lambda/\mu$, the content of a box $(i,j)$ is defined as $c = j - i$, and the $c$-th diagonal consists of all boxes in $\lambda/\mu$ having content $c$. Then a (border) strip can be also defined as a connected skew diagram whose cells have distinct contents.

An outside decomposition of $\lambda/\mu$ is a partition of the boxes of $\lambda/\mu$ into pairwise disjoint strips such that every strip in the decomposition has a starting box on the left or bottom perimeter of the diagram and an ending box on the right or top perimeter of the diagram. 
In a strip, any two adjacent boxes share exactly one common edge, which naturally gives rise to a well-defined direction from the starting box to the ending box. Specifically, for each box that is not the ending box, we say that it goes right if the next box is to its right, otherwise, we say that it goes up. 
Based on Hamel and Goulden's observation~\cite{HamelGoulden}, the contents of the ending (or starting) boxes of the strips in an outside decomposition are distinct. Thus, we may use the contents of the ending boxes to order the strips in an outside decomposition.
For an outside decomposition $\Pi$ with $\ell$ strips, we denote it by $\Pi=(\theta_1,\theta_2,\ldots,\theta_{\ell})$, where $\theta_1$ is the strip whose ending box has the largest content. 
For example, let $\lambda/\mu = (6,5,5,4,3)/(3,3,1,1)$, an outside decomposition 
\begin{equation}\label{eq-exam-outside}
	\Pi=((3),(2),(2,1),(2,2)/(1),(1),(3))
\end{equation} 
of this skew shape is depicted on the left side of Figure~\ref{fig-skew-partition}, and the directions of boxes in $\theta_3$ are: up, right, right.

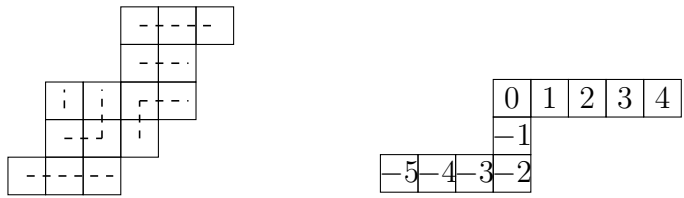
\begin{figure}[ht]
	\centering

	\begin{tikzpicture}[scale=0.5]

		
		\foreach \x in {3,...,5} {
			\draw (\x, -1) rectangle (\x+1, 0);
		}
		
		\foreach \x in {3,...,4} {
			\draw (\x, -2) rectangle (\x+1, -1);
		}
		
		\foreach \x in {1,...,4} {
			\draw (\x, -3) rectangle (\x+1, -2);
		}
		
		\foreach \x in {1,...,3} {
			\draw (\x, -4) rectangle (\x+1, -3);
		}
		
		\foreach \x in {0,...,2} {
			\draw (\x, -5) rectangle (\x+1, -4);
		}

		\draw[dashed, line width=0.6pt] (0.5, -4.5) -- (2.8, -4.5);    
		\draw[dashed, line width=0.6pt] (1.5, -3.5) -- (2.5, -3.5);  
		\draw[dashed, line width=0.6pt] (2.5, -3.5) -- (2.5, -2.2);   
		
		\draw[dashed, line width=0.6pt] (1.5, -2.7) -- (1.5, -2.2); 
		\draw[dashed, line width=0.6pt] (3.5, -3.5) -- (3.5, -2.5); 
		\draw[dashed, line width=0.6pt] (3.5, -2.5) -- (4.8, -2.5); 
		\draw[dashed, line width=0.6pt] (3.5, -1.5) -- (4.8, -1.5);
		\draw[dashed, line width=0.6pt] (3.5, -0.5) -- (5.5, -0.5);
		
	\end{tikzpicture}
	\qquad\qquad
		\begin{tikzpicture}[scale=0.5]
		
		
		\foreach \x in {3,...,7} {
			\draw (\x, 0) rectangle (\x+1, 1);
		}
		
		\foreach \x in {3,...,3} {
			\draw (\x, -1) rectangle (\x+1, 0);
		}
		
		\foreach \x in {0,...,3} {
			\draw (\x, -2) rectangle (\x+1, -1);
		}
	
		\node at (7.5,0.5) {$4$};
		\node at (6.5,0.5) {$3$};
		\node at (5.5,0.5) {$2$};
		\node at (4.5,0.5) {$1$};
		\node at (3.5,0.5) {$0$};
		\node at (3.5,-0.5) {$-1$};
		\node at (3.5,-1.5) {$-2$};
		\node at (2.5,-1.5) {$-3$};
		\node at (1.5,-1.5) {$-4$};
		\node at (0.5,-1.5) {$-5$};
	\end{tikzpicture}
	\caption{An outside decomposition and the cutting strip (with contents) of $\lambda/\mu$.}
	\label{fig-skew-partition}
\end{figure}

Let $\Pi$ be an outside decomposition of $\lambda/\mu$, and suppose $\lambda/\mu$ has $\ell$ diagonals. The cutting strip $T$ of $\Pi$ is constructed as follows: for $1\le i\le \ell-1$, the $i$-th box in $T$ goes right or goes up, matching the direction of the boxes in the $i$-th diagonal of $\lambda/\mu$ with respect to the decomposition $\Pi$.
The corresponding cutting strip (with contents) for the outside decomposition in~\eqref{eq-exam-outside} is illustrated on the right side of Figure~\ref{fig-skew-partition}. Given an outside decomposition $\Pi$ of shape $\lambda/\mu$ and a strip in $\Pi$. Let $\phi$ be the corresponding cutting strip of $\Pi$. For $p\le q$, let $[p,q]$ denote a segment of the cutting strip $\phi$ starting with the box having content $p$ and ending with the box having content $q$.
Chen, Yan, and Yang~\cite{CYY2004} gave the correspondence of the strip $\theta_i \# \theta_j$ defined in \cite{HamelGoulden} with $[p(\theta_j),q(\theta_i)]$, where $p(\theta)$ (respectively $q(\theta)$) denotes the content of the starting box (respectively ending box) of a strip $\theta$.

For example, for the outside decomposition in~\eqref{eq-exam-outside},
some strips obtained by the operation $\#$ are given as follows:
\begin{equation*}
	\begin{array}{ll}
		\theta_1\#\theta_5=[-2,4]=(5,1,1), & \theta_5\#\theta_1=[2,-2]= \text{undefined} \\
		\theta_2\#\theta_3=[-1,2]=(3,1),         & \theta_3\#\theta_2=[1,1]=(1) \\
		\theta_4\#\theta_6=[-5,-1]=(4,4)/(3),    & \theta_6\#\theta_4=[-3,-3]=(1).
	\end{array}
\end{equation*}

For a partition $\lambda$ of rank $k$ with Frobenius notation $(\alpha\mid\beta)=(\alpha_1,\ldots,\alpha_k\mid\beta_1,\ldots,\beta_k)$, the Giambelli matrix $G_{\lambda}$ is a special case of the Hamel–Goulden matrix~\cite{HamelGoulden}. Indeed, for $1\le i,j\le k$ we have $\theta_i\#\theta_j = [j-\lambda'_j,\lambda_i-i] = (\alpha_i\mid\alpha_j)$, so the outside decomposition of $\lambda$ consists precisely of the hooks on the diagonal.
For example, let $\lambda = (6,6,4,4,1,1)$, as shown in Figure~\ref{fig-out-Gm}, the outside decomposition of $\lambda$ is $\Pi = (\theta_1, \theta_2, \theta_3, \theta_4)$, where
\[
\theta_1 = (6,1^5),\quad \theta_2 = (5,1^2),\quad \theta_3 = (2,1),\quad \theta_4 = (1),
\]
and the corresponding cutting strip is $(6,1^5)$ as shown on the right side.

\begin{figure}[ht]
	\centering
	\begin{tikzpicture}[scale=0.5]                                                                                           \foreach \y in {0,...,5} {
			\ifnum\y=0\def\cols{6}\fi
			\ifnum\y=1\def\cols{6}\fi
			\ifnum\y=2\def\cols{4}\fi    
			\ifnum\y=3\def\cols{4}\fi
			\ifnum\y=4\def\cols{1}\fi
			\ifnum\y=5\def\cols{1}\fi
			
			\foreach \x in {1,...,\cols} {
				\draw (\x, -\y) rectangle (\x+1, -\y-1);
			}
		}

		\draw[dashed, line width=0.4pt] (1.5, -0.5) -- (1.5, -5.7);   
		\draw[dashed, line width=0.4pt] (1.5, -0.5) -- (6.6, -0.5);   

		\draw[dashed, line width=0.4pt] (2.5, -1.5) -- (2.5, -3.8);  
		\draw[dashed, line width=0.4pt] (2.5, -1.5) -- (6.5, -1.5); 

		\draw[dashed, line width=0.4pt] (3.5, -2.5) -- (3.5, -3.7); 
		\draw[dashed, line width=0.4pt] (3.5, -2.5) -- (4.7, -2.5); 

		\draw[dashed, line width=0.4pt] (4.2, -3.5) -- (4.7, -3.5);
		
	\end{tikzpicture}
	\qquad\qquad
		\begin{tikzpicture}[scale=0.5]                                             \foreach \y in {0,...,5} {
			\ifnum\y=0\def\cols{6}\fi
			\ifnum\y=1\def\cols{1}\fi
			\ifnum\y=2\def\cols{1}\fi    
			\ifnum\y=3\def\cols{1}\fi
			\ifnum\y=4\def\cols{1}\fi
			\ifnum\y=5\def\cols{1}\fi
			
			\foreach \x in {1,...,\cols} {
				\draw (\x, -\y) rectangle (\x+1, -\y-1);
			}
		}

	\end{tikzpicture}
	\caption{An outside decomposition and the cutting strip of $\lambda=(6,6,4,4,1,1)$.}
	\label{fig-out-Gm}
\end{figure}
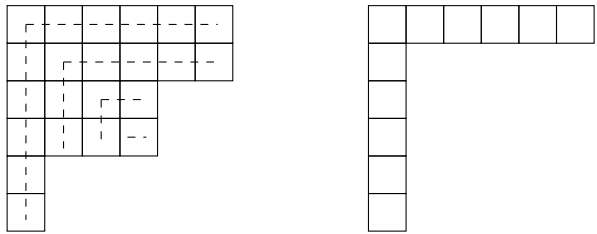

\section{Immanants of Jacobi-Trudi matrices}\label{sec-JT}
In this section, we first recall some related results concerning the immanants of Jacobi-Trudi matrices presented by Greene~\cite{Gre92}, including the lattice path construction of Gessel and Viennot~\cite{Gessel-Viennot-1985} and the reduction of Goulden and Jackson~\cite{GJ92}. Based on these, we provide a proof of Theorem~\ref{thm-SNP-JT}. Furthermore, we also prove the SNP property for the polynomials defined by Stanley and Stembridge~\cite{Stanley-Stembridge-1993} in the border strip case. 

Greene's lattice path construction~\cite{Gre92} is inherted from Gessel and Viennot's approach~\cite{Gessel-Viennot-1985}, which was used to give a combinatorial interpretation of the Jacobi-Trudi identity~\eqref{eq-JT-identity}.
The crucial idea is to interpret the complete homogeneous symmetric function $h_i(x_1, \ldots, x_{\ell})$ as the generating function for certain lattice paths. These paths start at $(0, 1)$ and end at $(i, \ell)$, where for $1\le j\le \ell$, a horizontal step at height $j$ is weighted by $x_j$, a vertical step is weighted by $1$, and the weight of a path is the product of the weights of its steps.
Then the product
\[\prod_{i=1}^n h_{(\lambda_i-i)-(\mu_{\pi_i}-\pi_i)}\]
can be interpreted as the generating function for $n$ (not necessarily disjoint) lattice paths, where the $i$-th path runs from the point $(\mu_{\pi_i} - \pi_i, 1)$ to the point $(\lambda_i - i, \ell)$. The weight of a path is the product of the weights of its steps, as defined previously.

For $i \in [n]$, let $P_i = (\mu_i - i, 1)$ and $Q_i = (\lambda_i - i, \ell)$, and let $\mathcal{F}$ be the collection of all lattice paths connecting ${P_1, \dots, P_n}$ to ${Q_1, \dots, Q_n}$ according to some permutation $\pi\in\mathfrak{S}_n$. Greene~\cite{Gre92} defined the skeleton of $\mathcal{F}$ as
\[\alpha=\alpha_{\mathcal{F}}=\bigcup_{P\in \mathcal{F}} \{\text{multiset of edges in some path }P\}.\]
In the same manner, let ${\bf x}^{\alpha}=\prod_{P\in\mathcal{F}}{\bf x}^{{\rm weight}(P)}$. Then each immanant of Jacobi-Trudi matrix has the following expansion:
\begin{align}
	{\rm Imm}_\nu H(\lambda,\mu) &= \sum_{\pi\in \mathfrak{S}_{n}}\chi^{\nu}(\pi)\prod_{i=1}^{n}h_{(\lambda_i-i)-(\mu_{\pi_i}-\pi_i)} \nonumber \\
	&= \sum_{\pi\in\mathfrak{S}_n}\chi^{\nu}(\pi)\sum_{\mathcal{F}}{\bf x}^{\alpha_{\mathcal{F}}} \nonumber \\
	&= \sum_{\alpha}{\bf x}^{\alpha}\sum_{\mathcal{F}}\chi^{\nu}(\pi_{\mathcal{F}}) \nonumber \\
	&= \sum_{\alpha}{\bf x}^{\alpha}\chi^{\nu}\left(\sum_{\mathcal{F}}\pi_{\mathcal{F}}\right), \label{eq-imm-expression}
\end{align}
where the inner sum is over all families of paths $\mathcal{F}$ having skeleton $\alpha$, and $\pi_{\mathcal{F}}$ denotes the permutation of endpoints induced by $\mathcal{F}$.

Goulden and Jackson showed the following result which was also noted in Greene's paper.
\begin{prop}[\cite{GJ92}]\label{prop-skleton}
	For lattice paths with fixed starting points $\mathcal{P}=\{P_1,P_2,\ldots,P_n\}$ and endpoints $\mathcal{Q}=\{Q_1,Q_2\ldots,Q_n\}$, there exists some subintervals $J_1,J_2,\ldots,J_{\ell}$ of $[n]$ such that $\sum_{\mathcal{F}}\pi_{\mathcal{F}}=\mathfrak{S}_{J_1}\times \mathfrak{S}_{J_1}\times\cdots \times \mathfrak{S}_{J_{\ell}}$, where $\mathcal{F}$ ranges over all lattice paths from $\mathcal{P}$ to $\mathcal{Q}$.  
\end{prop}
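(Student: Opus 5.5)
The plan is to fix a skeleton $\alpha$ and view the families $\mathcal{F}$ with skeleton $\alpha$ as the ways of \emph{routing} $n$ path copies through the directed network formed by $\alpha$. Since the statement is used inside the sum~\eqref{eq-imm-expression}, I read it in that form. Each edge of $\alpha$ carries a multiplicity. At every lattice point $v$, the number of copies entering $v$ equals the number leaving it, except at the sources $P_i$ and sinks $Q_j$. A family $\mathcal{F}$ is then the same thing as a consistent choice, at each vertex, of which incoming copy continues along which outgoing edge. Two paths of $\mathcal{F}$ that share a vertex $v$ can exchange their tails after $v$; this gives another family with the same skeleton $\alpha$, and its permutation is $\pi_{\mathcal{F}}$ composed with a transposition. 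I will use these \emph{tail swaps} throughout.

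The first step is to identify the blocks $J_1,\dots,J_\ell$. Declare $i\sim j$ if the paths starting at $P_i$ and $P_j$ lie in the same connected component of $\alpha$, viewed as an undirected graph. Because all paths take only unit up and right steps and the $P_i$ and $Q_i$ are ordered from left to right along two horizontal lines, planarity forces each component to use a set of consecutive starting points and the same set of consecutive endpoints. This makes the blocks subintervals $J_r$, and every $\pi_{\mathcal{F}}$ lies in $\mathfrak{S}_{J_1}\times\cdots\times\mathfrak{S}_{J_\ell}$. Conversely, within one component, adjacent paths with indices $i$ and $i+1$ must meet at some vertex: otherwise a path separating them would split the component. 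A tail swap at such a vertex realizes the adjacent transposition $(i,i+1)$ in every family, so the set of permutations that occur is closed under multiplication by the generators and is therefore the whole Young subgroup.

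The second step is the multiplicity statement: every $\sigma$ in the Young subgroup occurs for the same number $c_\alpha$ of families. For this I will build an explicit bijection between $\{\mathcal{F}:\pi_{\mathcal{F}}=\sigma\}$ and $\{\mathcal{F}:\pi_{\mathcal{F}}=\sigma\,(i,i+1)\}$. Given $\mathcal{F}$, perform the tail swap at the \emph{first} vertex, in a fixed linear order of lattice points such as lexicographic, where the two paths currently ending at $Q_i$ and $Q_{i+1}$ meet. Applying the same rule to the result returns $\mathcal{F}$, since the first meeting point is unchanged by the swap. So the map is an involution, and it changes the permutation by $(i,i+1)$. Hence
\[
\sum_{\mathcal{F}}\pi_{\mathcal{F}} \;=\; c_\alpha\,\mathfrak{S}_{J_1}\times\cdots\times\mathfrak{S}_{J_\ell}
\]
in the group algebra, with $c_\alpha$ a positive integer.

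The main obstacle is matching this with the statement as written, which has coefficient $1$. Two paths that meet at two vertices $u$ and $v$ and follow different routes between them can be swapped at $u$ and swapped back at $v$. This gives two distinct families with the same skeleton and the same permutation, so $c_\alpha=1$ cannot hold in general, and the precise content needed is the version with the factor $c_\alpha$. This version suffices for~\eqref{eq-imm-expression}: $c_\alpha$ is a positive scalar, so applying $\chi^\nu$ gives $c_\alpha\,\chi^\nu(\mathfrak{S}_{J_1}\times\cdots\times\mathfrak{S}_{J_\ell})$, which is nonnegative and has the same vanishing pattern, which is what Lemma~\ref{lem-Young_char} is used for. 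The delicate part is verifying carefully that the first-meeting involution really is an involution. I would check that the swap does not change which vertex of the two affected paths comes first in the chosen order. I would also check that the rule is well defined when several copies pass through the same vertex, by always pairing the copies currently routed toward $Q_i$ and $Q_{i+1}$.
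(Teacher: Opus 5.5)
The proposal has a genuine gap. The failing step is your claim that, inside one connected component of the skeleton, the two paths currently ending at $Q_i$ and $Q_{i+1}$ must share a vertex in every family (``otherwise a path separating them would split the component''). A third path can hold the component together while those two paths are disjoint. When that happens, the closure argument breaks, and the first-meeting involution is simply undefined.

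Concretely, take $\lambda/\mu=(1,1,1)$, so $n=d=3$, $P_i=(-i,1)$ and $Q_i=(1-i,\ell)$. Take the skeleton of Figure~\ref{fig-d2l} for the monomial $x_1^3$: the three horizontal edges all lie on row $1$ with multiplicity one. At each junction exactly one of the two incoming paths turns up. So there are exactly four families: $Q_3$ receives $P_3$ or $P_2$, and $Q_2$ receives $P_1$ or the path continuing along the row. Their permutations are $e$, $(1\,2)$, $(2\,3)$ and a single $3$-cycle, each occurring once; $(1\,3)$ and the other $3$-cycle never occur.

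Write $\overline{\mathfrak S}_J=\sum_{\sigma\in\mathfrak S_J}\sigma$. The sum is then $\overline{\mathfrak S}_{\{1,2\}}\overline{\mathfrak S}_{\{2,3\}}$ (in one order or the other, depending on convention). This is not a multiple of $\overline{\mathfrak S}_{\{1,2,3\}}$, even though the skeleton is connected. In the family realizing $(1\,2)$, the path to $Q_2$ is the vertical line from $P_1$, and the path to $Q_3$ stays in the columns $x\in\{-3,-2\}$. They are disjoint; only the path from $P_2$ to $Q_1$ connects the component.

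Your formula would also force $[x_1^3]\,\mathrm{Imm}_{(2,1)}H=c_\alpha\,\chi^{(2,1)}(\overline{\mathfrak S}_{\{1,2,3\}})=0$. Directly, however, $\mathrm{Imm}_{(2,1)}H=2h_1^3-h_3$, whose $x_1^3$-coefficient is $1$. That positivity is exactly what the paper needs in the proof of Theorem~\ref{thm-SNP-JT}.

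This also shows you misread the statement. The paper quotes it from Goulden--Jackson without proof. But in the proof of Theorem~\ref{thm-SNP-JT} it is applied with the overlapping intervals $\{1,2\},\{2,3\},\ldots,\{n-1,n\}$. So $\mathfrak S_{J_1}\times\cdots\times\mathfrak S_{J_\ell}$ denotes the group-algebra product $\overline{\mathfrak S}_{J_1}\cdots\overline{\mathfrak S}_{J_\ell}$ of interval sums, with the $J_r$ allowed to overlap or repeat. It is not a Young subgroup on disjoint blocks. Under that reading, the multiplicity phenomenon you flagged is harmless: two paths meeting at $u$ and at $v$ give $2e+2(1\,2)=\overline{\mathfrak S}_{\{1,2\}}^2$.

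What is missing is a local factorization in place of global tail swaps. The outline is:
\begin{enumerate}
\item Sweep the vertices of the skeleton in an order compatible with the up/right steps.
\item Use planarity to label the path copies by left-to-right positions $1,\dots,n$ along the sweep; the copies passing through a vertex $v$ then occupy an interval $J_v$ of positions.
\item Order the copies on each edge of multiplicity $m_e$, which introduces only the positive factor $\prod_e m_e!$. The routing choices at different vertices are then independent, and each vertex contributes a factor $\overline{\mathfrak S}_{J_v}$.
\end{enumerate}
This expresses $\sum_{\mathcal F}\pi_{\mathcal F}$ as a positive multiple of a product of possibly overlapping interval sums. Your first step, that components use matching consecutive blocks of $P$'s and $Q$'s, is correct. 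However, it only gives the coarse containment of every $\pi_{\mathcal F}$ in the corresponding Young subgroup, not the sum itself.
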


Now we are prepared to present the proof of Theorem~\ref{thm-SNP-JT}.

\begin{proof}[Proof of Theorem~\ref{thm-SNP-JT}]
	
	Given partitions $\lambda=(\lambda_{1},\ldots,\lambda_n)$ and $\mu=(\mu_1,\ldots,\mu_n)$ with $\mu_i\le \lambda_i$,
	it is clear that the degree of ${\rm Imm}_{\nu}H(\lambda,\mu)$ is $d=|\lambda_1-1|+|\mu_n-n|$.
	
	By definition, for the skeleton $\alpha = (d)$, the lattice family $\mathcal{F}$ corresponding to $\alpha$ must have the form depicted in Figure~\ref{fig-d2l}.
	\begin{figure}[ht]
		\centering
		\begin{tikzpicture}
			[place/.style={thick,fill=black!100,circle,inner sep=0pt,minimum size=1mm,draw=black!100},scale=1.2]
			\node [place,label=below:{\footnotesize$P_n$}] at (0,0) {};
			\node [place,label=below:{\footnotesize$P_{n-1}$}] at (1,0) {};
			\node [place,label=below:{\footnotesize$P_{n-2}$}] at (2,0) {};
			\node [label=below:{\footnotesize$\cdots$}] at (3.5,0) {};
			\node [place,label=below:{\footnotesize$P_1$}] at (4.5,0) {};
			\node [place,label=above:{\footnotesize$Q_n$}] at (1,2) {};
			\node [place,label=above:{\footnotesize$Q_{n-1}$}] at (2,2) {};
			\node [label=above:{\footnotesize$\cdots$}] at (3.5,2) {};
			\node [place,label=above:{\footnotesize$Q_2$}] at (4.5,2) {};
			\node [place,label=above:{\footnotesize$Q_1$}] at (5.5,2) {};
			\node [place] at (0,1) {};
			\node [place] at (1,1) {};
			\node [place] at (2,1) {};
			\node [place] at (4.5,1) {};
			\node [place] at (5.5,1) {};
			\node [label=above:{\footnotesize$x_j$}] at (0.5,1) {};
			\node [label=above:{\footnotesize$x_j$}] at (1.5,1) {};
			\node [label=above:{\footnotesize$\cdots$}] at (3.5,1) {};
			\node [label=above:{\footnotesize$x_j$}] at (5,1) {};
			\draw [thick]  (0,0) --(0,1);
			\draw [thick]  (0,1) --(1,1);
			\draw [thick]  (1,1) --(2,1);
			\draw [thick]  (2,1) --(4.5,1);
			\draw [thick]  (4.5,1) --(5.5,1);
			\draw [thick]  (1,0) --(1,2);
			\draw [thick]  (2,0) --(2,2);
			\draw [thick]  (4.5,0) --(4.5,2);
			\draw [thick]  (5.5,1) --(5.5,2);
		\end{tikzpicture}
		\caption{The skeleton $\alpha=(d)$ with weight ${\bf x}^{\alpha}=x_j^d$}\label{fig-d2l}
	\end{figure}
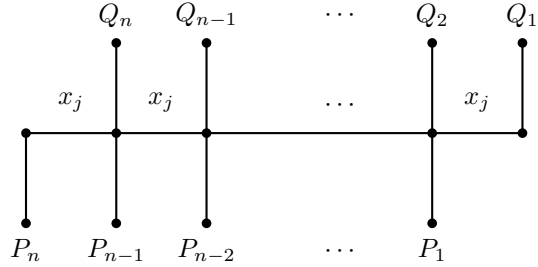
    
    Therefore, by applying Proposition~\ref{prop-skleton}, we have
	\begin{align*}
		\chi^{\nu}(S_J)=\chi^{\nu}\left(\mathfrak{S}_{\{1,2\}}\times \mathfrak{S}_{\{2,3\}}\times \cdots\times \mathfrak{S}_{\{n-1,n\}}\right)
	\end{align*}
	If we set $\nu=(n-1,1)$, then $V_{\nu}$ is the standard representation
	\[V=\{(x_1,\ldots,x_n)\mid x_1+x_2+\cdots+x_n=0\},\]
	and as stated in \eqref{eq-standard}, for each $g\in \mathfrak{S}_n$, its irreducible character has the explicit explanation
	\[\chi_V(g)=\fix(g)-1. \]
	
	Let $f_k(n)$ denote the number of permutations in $\mathfrak{S}_{\{1,2\}}\times \mathfrak{S}_{\{2,3\}}\times\cdots\times \mathfrak{S}_{\{n-1,n\}}$ with $k$ fixed points.
	Then we have 
	\[\chi^{(n-1,1)}\left(\mathfrak{S}_{\{1,2\}}\times \mathfrak{S}_{\{2,3\}}\times \cdots\times \mathfrak{S}_{\{n-1,n\}} \right) =\sum_{k=0}^nf_k(n)\cdot (k-1).\]
	For all $n\ge 1$, we have $f_k(n)\ge 0$ and $f_n(n)=1$. Moreover, for $n\ge 3$, we have $f_0(n)\ge n-2$ since the permutation $(123\cdots n)$ and those permutations of the form $(1\cdots i)(i+1\cdots n)$ (of which there are $n-3$ choices for $i$) all have zero fixed points. Thus, the coefficient of $x_j^d$ must be non-zero for each $1\le j\le k$. 
	Specifically, let $[x_{\alpha}]f$ denote the coefficient of the monomial $x_{\alpha}$ in $f$, by the expression illustrated in \eqref{eq-imm-expression}, we have
	\begin{align*}
		[x_j^d]\,{\rm Imm}_{(n-1,1)}(\lambda,\mu)&=\chi^{(n-1,1)}\left(\mathfrak{S}_{\{1,2\}}\mathfrak{S}_{\{2,3\}}\cdots \mathfrak{S}_{\{n-1,n\}}\right)\\
		&\ge(n-2)(-1)+\sum_{k=1}^{n-1}f_k(n)(k-1)+n-1>0
	\end{align*}
	Based on the symmetric property, the Schur expansion of ${\rm Imm}^{(n-1,1)}H(\lambda,\mu)$ must have $s_{(d)}$ as the maximal term on dominance order if $\lambda/\mu$ is a border strip of size $d$.
	Consequently, 
	\[{\rm Newton}\left({\rm Imm}_{(n-1,1)}H(\lambda,\mu) \right) =\mathcal{P}_{(d)},\]
	which follows by combining Theorem~\ref{thm-SNP-Schur-combin} with the Schur-positivity of ${\rm Imm}^{(n-1,1)}H(\lambda,\mu)$.
	
	Now, for $\nu=(n)$, the immanant specializes to the permanent, and we have
	\[{\rm supp}\left( {\rm per}\, H(\lambda,\mu)\right)= {\rm supp}\left(s_{(d)} \right)\]
	for any skew partition $\lambda/\mu \vdash d$.
	This follows from Equation \eqref{eq-h-s}, Theorem \ref{thm-Rado-Schur} and the nonnegativity of those coefficients in the $h$-expansion of ${\rm per}\, H(\lambda,\mu)$.
	This completes the proof of Theorem~\ref{thm-SNP-JT}.
	
\end{proof}

Next, to illustrate the second main result of this section, we first recall some basic notations.
Given a composition $\alpha=(\alpha_1,\ldots,\alpha_{\ell})$ of positive integer $d$.
We call a refinement of composition $\alpha$ is the replacement of $\alpha_i$ by a composition of $\alpha_i$. 
If $\beta$ is a refinement of $\alpha$, then conversely, $\alpha$ is a meld of $\beta$. 
Note that the compositions of $\ell$ (the length of $\alpha$) can be used to index the melds of $\alpha$.
Let $\alpha|_{\gamma}$ denote the meld of $\alpha$ indexed by $\gamma$.

Recall that $E_{\lambda/\mu}^{\theta}({\bf y})$ is the coefficient of $s_{\theta}({\bf x})$ in $F_{\lambda/ \mu}({\bf x},{\bf y})$ (refer to ).
Stanley and Stembridge~\cite{Stanley-Stembridge-1993} gave the $p$-expansion of $E_{\lambda/\mu}^{\theta}$ for the case when $\lambda/\mu$ is a border strip.
\begin{prop}[{\cite[Lemma 2.1]{Stanley-Stembridge-1993}}]\label{prop-E-expression}
	If $\lambda/\mu$ is a border strip, then
	\[E_{\lambda/\mu}^{\theta}=\sum_{\alpha}K_{\theta,\,(\lambda-\mu)|_{\alpha}}p_{\alpha_1}\cdots p_{\alpha_{\ell}}, \]
	where the summation is over all compositions of $n$, i.e. the length of $\lambda/\mu$.
\end{prop}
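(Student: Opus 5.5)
The plan is to compute $F_{\lambda/\mu}(\mathbf{x},\mathbf{y})$ directly from the definition of the immanant. The border strip condition should collapse the sum over $\mathfrak{S}_n$ to a sum over compositions of $n$. After that, the identity follows from two classical expansions: $p_\rho=\sum_\nu \chi^\nu(\rho)s_\nu$ and \eqref{eq-h-s}.

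\emph{Step 1: the shape of $H(\lambda,\mu)$.} Write $r_i=\lambda_i-\mu_i$ for the length of row $i$. Since $\lambda/\mu$ is a border strip (connected, no $2\times 2$ square), consecutive rows overlap in exactly one column, so $\mu_i=\lambda_{i+1}-1$ for $1\le i<n$. Then the $(i,i+1)$ entry is $h_{(\lambda_i-i)-(\lambda_{i+1}-1-i-1)}=h_{\lambda_i-\lambda_{i+1}+2}$. This needs care, so I will recompute it against the paper's convention $H=(h_{(\lambda_i-i)-(\mu_j-j)})$ and a small example.

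What I expect to show is the following. The matrix is lower Hessenberg: entries with $j\ge i+2$ vanish because the index is negative. The superdiagonal entries are equal to $h_0=1$, after indexing so that the relevant entry reads $h_{(\lambda_{i}-i)-(\mu_{i-1}-(i-1))}$ or its transpose, as appropriate. For a block of consecutive indices $[a,b]$, the index telescopes to
\[
(\lambda_b-b)-(\mu_a-a)=r_a+r_{a+1}+\cdots+r_b .
\]
Getting these orientation conventions right is the main place where I expect friction.

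\emph{Step 2: the surviving permutations.} In a Hessenberg matrix with $1$'s on the superdiagonal, $\prod_i a_{i,\pi(i)}\neq 0$ forces $\pi(i)\le i+1$ for all $i$. Such permutations are exactly those that split $[n]$ into consecutive intervals $[a,b]$, acting on each interval by the cycle $a\to a+1\to\cdots\to b\to a$. These permutations correspond bijectively to compositions $\gamma$ of $n$, and the cycle type of $\pi_\gamma$ is the multiset of parts of $\gamma$. By Step 1 the product of the entries is $h_{(\lambda-\mu)|_{\gamma}}$, the product of $h$'s over the meld. Hence
\[
{\rm Imm}_\nu H(\lambda,\mu)=\sum_{\gamma}\chi^\nu(\gamma)\,h_{(\lambda-\mu)|_{\gamma}}(\mathbf{x}).
\]

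\emph{Step 3: assembling $F_{\lambda/\mu}$.} Summing over $\nu$ and using $\sum_\nu \chi^\nu(\gamma)s_\nu(\mathbf{y})=p_{\gamma}(\mathbf{y})$ gives
\[
F_{\lambda/\mu}(\mathbf{x},\mathbf{y})=\sum_\gamma p_{\gamma_1}\cdots p_{\gamma_\ell}(\mathbf{y})\,h_{(\lambda-\mu)|_\gamma}(\mathbf{x}).
\]
Now expand each $h$ via \eqref{eq-h-s}. Since $h$ indexed by a composition equals $h$ of the sorted partition, the Kostka number for a composition content is that of its rearrangement. Taking the coefficient of $s_\theta(\mathbf{x})$ as in \eqref{eq-E-def} then yields the stated $p$-expansion of $E^\theta_{\lambda/\mu}$.
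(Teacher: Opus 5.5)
The paper does not prove this statement; it quotes it from Stanley--Stembridge. Your argument is the standard one and its structure is sound. For a border strip the Jacobi--Trudi matrix is Hessenberg with $h_0=1$ on one off-diagonal, so only products of cycles on consecutive intervals survive, and each block contributes $h$ of its melded row lengths. Then $\sum_\nu\chi^\nu(\gamma)s_\nu(\mathbf{y})=p_\gamma(\mathbf{y})$ together with \eqref{eq-h-s} (with $K_{\theta,\beta}$ for a composition $\beta$ equal to that of its sorting) gives exactly the stated expansion.

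The Step 1 computations, however, are transposed relative to the paper's convention, and your displayed telescoping identity is false as written.

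\begin{itemize}
\item Take $H=(h_{(\lambda_i-i)-(\mu_j-j)})$ and $\mu_i=\lambda_{i+1}-1$ for $i<n$. For $j<i$ the $(i,j)$ entry has index $\lambda_i-\lambda_{j+1}+1-(i-j)$. So it equals $h_0=1$ for $j=i-1$ and vanishes for $j\le i-2$: $H$ is \emph{upper} Hessenberg, with $1$'s on the subdiagonal.
\item The corner entry of a block $[a,b]$ is $h_{(\lambda_a-a)-(\mu_b-b)}$. It is this index that telescopes, since $\lambda_a-\mu_b+(b-a)=\sum_{i=a}^{b}r_i$ by $\lambda_{i+1}-\mu_i=1$.
\item Your $(\lambda_b-b)-(\mu_a-a)$ is the transposed entry. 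It equals $\lambda_b-\lambda_{a+1}+1-(b-a)\le 0$ for $b>a$; for the hook $(2,1)$ it is $0$ rather than $3$.
\item Your $(i,i+1)$ entry substituted $\mu_{i+1}=\lambda_{i+1}-1$, which is not the border-strip relation. The correct value is $h_{r_i+r_{i+1}}$.
\end{itemize}

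The fix is immediate. Either run Step 2 with the condition $\pi(i)\ge i-1$ and cycles $b\to b-1\to\cdots\to a\to b$. Or replace $H$ by $H^{T}$, which changes no immanant, since $\chi^\nu(\pi^{-1})=\chi^\nu(\pi)$ gives $\operatorname{Imm}_\nu A^{T}=\operatorname{Imm}_\nu A$. After this correction, Steps 2 and 3 go through unchanged.
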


We find that the Kostka numbers satisfy the following refinement condition.
\begin{prop}\label{prop-refine-Kostka}
	Let $\lambda/\mu$ and $\theta$ be two partitions of the same size. 
	If $K_{\theta,\,(\lambda-\mu)|_{\alpha}}\neq 0$, and $\beta$ is a refinement of $\alpha$, then we have $K_{\theta,\,(\lambda-\mu)|_{\beta}}\neq 0$.
\end{prop}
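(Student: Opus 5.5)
The key point is that for a composition $\gamma$, the Kostka number $K_{\theta,\gamma}$ depends only on the multiset of parts of $\gamma$. This is because Schur functions are symmetric, so $K_{\theta,\gamma}=K_{\theta,\tilde\gamma}$, where $\tilde\gamma$ is the partition obtained by sorting $\gamma$. By Theorem~\ref{thm-Kostka-dominance}, $K_{\theta,\gamma}\neq 0$ if and only if $\tilde\gamma\unlhd\theta$. The proposition therefore reduces to the following claim. Let $\gamma=(\lambda-\mu)|_{\alpha}$ and $\delta=(\lambda-\mu)|_{\beta}$. If $\beta$ refines $\alpha$, then $\delta$ refines $\gamma$. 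The plan is to prove this claim and then show that refining a composition can only move its sorted partition down in dominance order.

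\textbf{Step 1: the melds are compatible.} The composition $\beta$ refines $\alpha$, and both are compositions of the length $n$ of $\lambda-\mu$. So each block of consecutive indices of $\lambda-\mu$ merged under $\alpha$ is a union of consecutive blocks merged under $\beta$. Summing block by block, each part of $\gamma$ is a sum of consecutive parts of $\delta$. Hence $\delta$ is a refinement of $\gamma$.

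\textbf{Step 2: refinement lowers the sorted partition in dominance order.} We show that if $\delta$ refines $\gamma$, then $\tilde\delta\unlhd\tilde\gamma$. Any refinement is obtained by repeatedly splitting a single part $a$ into two parts $b+c=a$ with $b,c\ge 1$, so it suffices to treat one such split. Fix $i$. The sum of the $i$ largest parts of $\tilde\delta$ equals the maximum of the sum of parts over all $i$-element subsets of parts of $\delta$. Take an optimal subset for $\delta$. Replace whichever of $b,c$ it contains by $a$; if it contains both, replace them by $a$ together with any other part. This gives a subset of at most $i$ parts of $\gamma$ whose sum is at least as large. Padding with further parts (all nonnegative) if needed, we get a subset of exactly $i$ parts of $\gamma$. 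Hence the $i$-th partial sum of $\tilde\delta$ is at most that of $\tilde\gamma$, which gives $\tilde\delta\unlhd\tilde\gamma$.

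\textbf{Step 3: conclude.} Since $K_{\theta,\gamma}\neq0$, we have $\tilde\gamma\unlhd\theta$. By Step 2, $\tilde\delta\unlhd\tilde\gamma\unlhd\theta$, and transitivity of dominance gives $\tilde\delta\unlhd\theta$. Theorem~\ref{thm-Kostka-dominance} then yields $K_{\theta,\delta}\neq 0$.

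The main obstacle is technical rather than conceptual. Theorem~\ref{thm-Kostka-dominance} is stated for partitions, but the melds are compositions, so we must justify $K_{\theta,\gamma}=K_{\theta,\tilde\gamma}$. This follows from the symmetry of $s_\theta$ together with \eqref{eq-s-m}. We must also handle the case where some parts of $\lambda-\mu$ are zero, in which case the melds may have zero parts; dropping these parts changes neither the Kostka numbers nor the dominance comparison.
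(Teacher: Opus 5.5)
Your proof is correct and follows the same route as the paper. Both arguments show that the meld indexed by the refinement $\beta$ is dominated by the meld indexed by $\alpha$, and then chain this with Theorem~\ref{thm-Kostka-dominance}. Your version is more careful in two respects. First, you sort the compositions into partitions before comparing them. The paper compares the unsorted melds directly, which the partition-based Kostka criterion does not literally cover; for example, $(1,3)\unlhd(2,2)$ as compositions, yet $K_{(2,2),(1,3)}=0$. Second, you prove that splitting a part lowers the sorted partition in dominance order, whereas the paper only asserts it.
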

\begin{proof}
	Given a tableaux of shape $\theta$, and type $(\lambda-\mu)|_{\alpha}$. Set $\lambda-\mu=\gamma$, then by definition we have
	\begin{align*}
		\gamma|_{\alpha}=(\gamma_1+\cdots+\gamma_{\alpha_1},\gamma_{\alpha_1+1}+\cdots+\gamma_{\alpha_1+\alpha_2},\ldots)\\
		\gamma|_{\beta}=(\gamma_1+\cdots+\gamma_{\beta_1},\gamma_{\beta_1+1}+\cdots+\gamma_{\beta_1+\beta_2},\ldots)
	\end{align*}
	Since $\beta$ is a refinement of $\alpha$, we have $\gamma|_{\beta}\trianglelefteq \gamma|_{\alpha}$ in dominance order.
	Thus, by applying Theorem~\ref{thm-Kostka-dominance}, $K_{\theta,\,\gamma|_{\alpha}}\neq 0$ implies that $\gamma|_{\beta}\trianglelefteq \gamma|_{\alpha}\trianglelefteq \theta$, and therefore $K_{\theta, \, \gamma|_{\beta}}\neq 0$. This completes the proof.
\end{proof}

Combining the above results, we can finally present the proof of Theorem~\ref{thm-E-SNP}.
\begin{proof}[Proof of Theorem~\ref{thm-E-SNP}]
	Since there exists a composition $\alpha$ such that $K_{\theta, (\lambda-\mu)|_{\alpha}} \neq 0$, and since $(1^d) \trianglelefteq \alpha$, Proposition~\ref{prop-refine-Kostka} ensures that $K_{\theta, (\lambda-\mu)|_{(1^d)}} \neq 0$.
	By Proposition~\ref{prop-E-expression}, we have that $E_{\lambda/\mu}^{\theta}$ is $p_{\lambda}$-positive, and therefore there must exist the term $p_{(1^d)} = s_{(d)}$ in its expansion. The proof now follows from Theorems~\ref{thm-Rado-Schur} and~\ref{thm-SNP-Schur-combin}.
\end{proof}

\section{Immanants of Giambelli matrices}\label{sec-Gm}
In this section, we present the proof of Theorem~\ref{thm-SNP-Giam}. Our approach relies heavily on the planar network construction introduced by Hamel and Goulden~\cite{HamelGoulden}.

Given a partition $\lambda = (\lambda_1, \lambda_2, \ldots, \lambda_{n})$ of rank $k$, as illustrated at the end of Section~\ref{sec-pre}, 
the outside decomposition is $\Pi = (\theta_1, \theta_2, \ldots, \theta_k)$, where 
\[
\theta_i = (\lambda_i - i + 1, 1^{\lambda'_i - i}),
\]
and the cutting strip of the Giambelli matrix $G_{\lambda}$ is the partition $\phi = (\lambda_1, 1^{n-1})$.

Assume that the contents in $\phi$ range from $c_1$ to $c_2$, and for $1 \le i \le k$, the contents of the starting box and the ending box of $\theta_i$ are $c_i'$ and $c_i''$, respectively.
Consider the directions of boxes in $\phi$, if a box with content $c$ (where $c_1 < c \leq c_2$) comes from the left (go right), then we set the direction of $x = c$ to be downward; conversely, if the box comes from the bottom (go up), then we set the direction of $x = c$ to be upward. Additionally, the directions of $x = c_1$ and $x = c_2 + 1$ can be chosen arbitrarily, as these choices will not affect the construction of our planar network.

For $1\le i\le k$, set $P_i = (c_i', -\infty)$ and $Q_i = (c_i'' + 1, -\infty)$ to be the starting points and ending points, respectively.
The weights of the planar network are assigned according to the following rules:
\begin{itemize}
	\item All vertical (up and down) steps have weight 1.
	\item The horizontal step from $(c_1, -j)$ to $(c_1 + 1, -j)$ has weight $x_j$.
	\item For $c_1 < c \le c_2$, suppose the horizontal step from $(c-1, j)$ to $(c, j)$ has weight $x_t$.
	\begin{itemize}
		\item If the direction at $x = c$ is upward, then the horizontal step from $(c, j)$ to $(c+1, j)$ has weight $x_{t - 1}$.
		\item If the direction at $x = c$ is downward, then the horizontal step from $(c, j)$ to $(c+1, j)$ has weight $x_t$.
	\end{itemize}
\end{itemize}

Following the aforementioned example, let $\lambda = (6,6,4,4,1,1)$, refer to Figure~\ref{fig-out-Gm},
the starting points and ending points are as follows:
\begin{align*}
	P_1 &= (-5, -\infty), & P_2 &= (-2, -\infty), & P_3 &= (-1, -\infty), & P_4 &= (0, -\infty), \\
	Q_1 &= (6, -\infty),  & Q_2 &= (5, -\infty),  & Q_3 &= (2, -\infty),  & Q_4 &= (1, -\infty).
\end{align*}

In Figure~\ref{fig-Gm-network}, we present all possible weights and those starting and ending points for the planar network associated with the Giambelli matrix $G_{\lambda}$, omitting the labels of steps with weight $1$.
Besides, we also present a family of lattice paths corresponding to the monomial ${\bf x}^{\alpha}=(x_1^6x_2x_3x_4x_5x_6)\cdot(x_1^5x_2x_3)\cdot(x_2^2x_3)\cdot x_3=x_1^{11}x_2^4x_3^4x_4x_5x_6$.
	
\begin{figure}[ht]
	\centering
			\begin{tikzpicture}
		[place/.style={thick,fill=black!100,circle,inner sep=0pt,minimum size=1mm,draw=black!100},scale=0.6]
		
		\foreach \y in {-5,-4,-3,-2,-1,0,1,2,3,4,5} {
			\foreach \x in {0,1,2,3,4,5,6,7} {
				\node (v\x\y) at (\x,\y) {};
			}
			
			\foreach \x in {-1,-2,-3,-4,-5,-6,-7} {
				\node (v\x\y) at (\x,\y) {};
			}
			
			\node (v8\y) at (8,\y) {};
			
			\draw (v-7\y) -- (v7\y);
		}
		
		\foreach \x in {0.5,1.5,2.5,3.5,4.5,5.5} {
			\foreach \y in {-4,-3,-2,-1,0} {
				\node at (\x,\y-0.25) {$x_{\the\numexpr(-\y)+1\relax}$};
			}
		}
		
		\foreach \x [count=\i from 0] in {-0.5,-1.5,-2.5,-3.5,-4.5,-5.5} {
			\foreach \y in {-4,-3,-2,-1,0} {
				\pgfmathsetmacro{\label}{int(-\y + \i + 1)}
				\node at (\x,\y-0.25) {$x_{\label}$};
			}
		}
		\foreach \x [count=\i from 0] in {-1.5,-2.5,-3.5,-4.5,-5.5} {
			\foreach \y in {1} {
				\pgfmathsetmacro{\label}{int( \y + \i )}
				\node at (\x,\y-0.25) {$x_{\label}$};
			}
		}
		\foreach \x [count=\i from 0] in {-2.5,-3.5,-4.5,-5.5} {
			\foreach \y in {2} {
				\pgfmathsetmacro{\label}{int( \y + \i-1 )}
				\node at (\x,\y-0.25) {$x_{\label}$};
			}
		}
		\foreach \x [count=\i from 0] in {-3.5,-4.5,-5.5} {
			\foreach \y in {3} {
				\pgfmathsetmacro{\label}{int( \y + \i-2 )}
				\node at (\x,\y-0.25) {$x_{\label}$};
			}
		}
		\foreach \x [count=\i from 0] in {-4.5,-5.5} {
			\foreach \y in {4} {
				\pgfmathsetmacro{\label}{int( \y + \i-3 )}
				\node at (\x,\y-0.25) {$x_{\label}$};
			}
		}
		\foreach \x [count=\i from 0] in {-5.5} {
			\foreach \y in {5} {
				\pgfmathsetmacro{\label}{int( \y + \i-4 )}
				\node at (\x,\y-0.25) {$x_{\label}$};
			}
		}
		\foreach \x in {-6,-5,-4,-3,-2,-1} {
			\draw [->] (\x,-5.5) -- (\x,5.5);
		}
		\foreach \x in {0,1,2,3,4,5,6} {
			\draw [->] (\x,5.5) -- (\x,-5.5);
		}
		\foreach \x in {0,1,2,3,4,5,6,-1,-2,-3,-4,-5,-6} {
			\pgfmathsetmacro{\label}{int(\x  + 1)}
			\node at (\x,-6) {\label};
		}
		\node at (-5-1,-7.2) {$P_1$};
		\node at (-2-1,-7.2) {$P_2$};
		\node at (-1-1,-7.2) {$P_3$};
		\node at (0-1,-7.2) {$P_4$};
		\node at (1-1,-7.2) {$Q_4$};
		\node at (2-1,-7.2) {$Q_3$};
		\node at (5-1,-7.2) {$Q_2$};
		\node at (6-1,-7.2) {$Q_1$};
		
		\draw [line width=1.2pt, purple]  (-6,-7) -- (-6,0);
		\draw [line width=1.2pt, purple]  (-3,-7) -- (-3,0);
		\draw [line width=1.2pt, orange]  (-2,-7) -- (-2,-1);
		\draw [line width=1.2pt, orange]  (1,-7) -- (1,-1);
		\draw [line width=1.2pt, purple]  (4,-7) -- (4,0);
		\draw [line width=1.2pt, purple]  (5,-7) -- (5,0);
		\draw [line width=1.2pt, purple] (-6,0) -- (5,0);
		\draw [line width=1.2pt, teal]  (-1,-7) -- (-1,-2);
		\draw [line width=1.2pt, teal]  (0,-7) -- (0,-2);
		\draw [line width=1.2pt, teal] (-1,-2) -- (0,-2);
		\draw [line width=1.2pt, orange] (-2,-1) -- (1,-1);
	\end{tikzpicture}
	\caption{A planar network of $G_{\lambda}$.}
	\label{fig-Gm-network}
\end{figure}
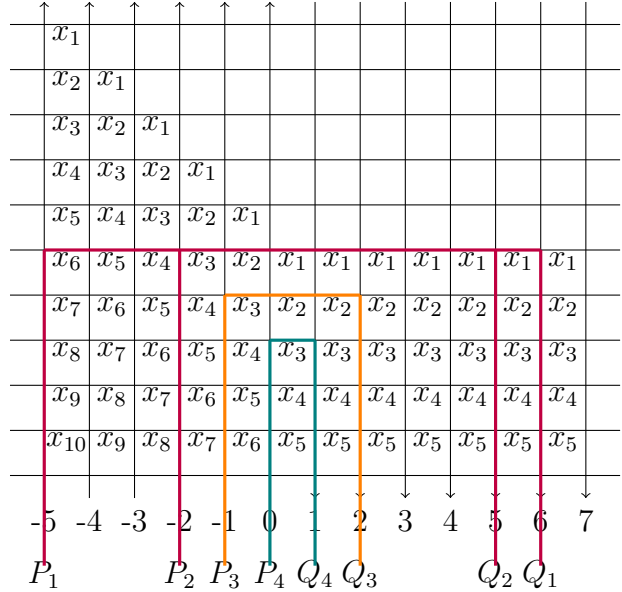

Now we can give the proof of Theorem~\ref{thm-SNP-Giam}.

\begin{proof}[Proof of Theorem~\ref{thm-SNP-Giam}]
Since the cutting strip of the Giambelli matrix $G_{\lambda} $ is the largest hook of $\lambda$, the boxes from the starting box to the ending box must be first go up then go right. According to the rules of the planar network construction, the directions of all vertical steps must be first all upward then all downward from the left to the right.
Specifically, let $c_1$ and $c_2$ be the contents of the starting box and the ending box, respectively, then the direction of each $x=c$ is upward if $c_1\le c\le0$, and is downward if $0< c \le c_2$.
Thus, on the one hand, when considered horizontally, the weights of the horizontal steps to the left of the line $x=0$ decrease step by step from right to left, while those to the right of $x=0$ remain constant on each level. On the other hand, when viewed vertically, the weights of the horizontal steps form a sequence that decreases step by step from top to bottom.

Suppose that $\nu = (\nu_1, \nu_2, \ldots, \nu_t)$.
By the construction of the planar network for $G_{\lambda}$ described before, we observe that the lattice paths corresponding to the largest term (in dominance order) in the monomial expansion of $\mathrm{Imm}_{\nu}G_{\lambda}$ are distributed as high as possible and have type $\nu$. 
That is, the lattice paths $\{P_1 \rightarrow Q_1, \ldots, P_{\nu_1} \rightarrow Q_{\nu_1} \}$ overlap on the row containing the most steps weighted by $x_1$, and for $2 \le i \le t$, the lattice paths 
\[
\left\{P_{\sum_{j=1}^{i-1}\nu_j + 1} \rightarrow Q_{\sum_{j=1}^{i-1}\nu_j + 1}, \ldots, P_{\sum_{j=1}^{i}\nu_j} \rightarrow Q_{\sum_{j=1}^{i}\nu_j} \right\}
\] 
overlap on the row containing the most steps weighted by $x_i$, respectively.


Similar to the proof of Theorem~\ref{thm-SNP-JT}, by applying the expression in \eqref{eq-imm-expression} and Proposition~\ref{prop-skleton}, it suffices to prove that 
\[
\chi^{\nu}\left(\mathfrak{S}_{\{1,2,\ldots,\nu_1\}} \mathfrak{S}_{\{\nu_1+1,\ldots,\nu_1+\nu_2\}} \cdots \mathfrak{S}_{\{\sum_{j=1}^{t-1}\nu_j+1,\ldots,k\}} \right) \neq 0,
\]
and $\chi^{\nu}(\mathfrak{S}_{\mu}) = 0$ for any $\mu \trianglerighteq \nu$ in the dominance order. 
This now follows from Lemma~\ref{lem-Young_char}, which completes the proof.
\end{proof}

 \section{Further work}\label{sec-further-work}

In this paper, for immanants of Jacobi--Trudi matrices, we have only proved their SNP property for several special partitions (namely $\nu=(n)$ and $\nu=(n-1,1)$ under a border strip condition). It is natural to ask whether this property holds in general. Based on computer experiments conducted in Sage~\cite{sage}, we propose the following conjecture.

\begin{conj}\label{conj-1}
	For a skew partition $\lambda/\mu$ of length $n$ and any partition $\nu \vdash n$, each immanant $\operatorname{Imm}_\nu H(\lambda,\mu)$ (in finitely many variables) is SNP.
\end{conj}

For now, we have established the SNP property for several families of immanant-derived polynomials and shown that their Newton polytopes are special classes of generalized permutohedra. In fact, these polynomials satisfy a stronger property than SNP, known as M-convexity.

A set $J \subset \mathbb{N}^n$ is called M-convex if for any $\alpha,\beta \in J$ and any index $i$ with $\alpha_i > \beta_i$, there exists $j$ such that $\alpha_j < \beta_j$ and $\alpha - e_i + e_j \in J$. A polynomial is M-convex if its support is M-convex. For a homogeneous polynomial, M-convexity is equivalent to the SNP property together with the condition that its Newton polytope is a generalized permutahedron~\cite[Remark 4.1.1]{StD2020}. For further background, see~\cite{POS2009, Mur03}. For more background, see~\cite{POS2009, Mur03}.
Moreover, there is a close connection between M-convex polynomials and Lorentzian polynomials, the latter of which was introduced by Br\"and\'en and Huh \cite{BH2022} as a generalization of volume polynomials and stable polynomials. 
Specifically, we say that a homogeneous polynomial $f(x_1,\ldots,x_n)$ of degree $d$ with nonnegative coefficients is a Lorentzian polynomial if and only if it satisfies:
 \begin{itemize}
 	\item The support of $f$ forms an M-convex set.
 	\item The quadratic form $\partial_{i_1} \cdots \partial_{i_{d-2}}f $ has at most one positive eigenvalue for any index $i_1,\ldots, i_{d-2}\in[n]$.
 \end{itemize}
 Br\"and\'en and Huh proved that if ${\rm supp}(f)$ is M-convex then the polynomial 
 \[\sum_{\alpha\in{\rm supp}(f)}\frac{x_1^{\alpha_1}}{\alpha_1!}\cdots\frac{x_{n}^{\alpha_{n}}}{\alpha_{n}!}\]
 is Lorentzian; see \cite[Theorem 3.10]{BH2022} for more information.
Inspired by the above result, they defined the normalization of a polynomial $f=\sum_{\alpha=(\alpha_1,\ldots,\alpha_n)}c_{\alpha}x^{\alpha}$ as
 \[N(f)=\sum_{\alpha\in{\rm supp}(f)}c_{\alpha}\frac{x_1^{\alpha_1}}{\alpha_1!}\cdots\frac{x_{n}^{\alpha_{n}}}{\alpha_{n}!}.\]
Later, Huh, Matherne, M\'{e}sz\'{a}ros, and St.~Dizier~\cite{HMMD2022} conjectured that $N(s_{\lambda/\mu})$ is Lorentzian for any skew partition $\lambda/\mu$ and proved this conjecture when $\mu=\emptyset$. 
Motivated by their conjecture, and Theorem~\ref{thm-SNP-JT}, we consider the following problem.
 
 \begin{prob}
 	Is it true that the normalization of each immanant of Jacobi–Trudi matrix is a Lorentzian polynomial?
 \end{prob}
 
Unfortunately, the answer is negative in general. For example, take the skew partition $\lambda/\mu=(3,2)/(1)$ and the partition $\nu=(2)$, one can compute that
 \[{\rm Imm}_{\nu}H(\lambda,\mu)=7m_{(1,1,1,1)}+5m_{(2,1,1)}+4m_{(2,2)}+3m_{(3,1)}+2m_{(4)},\]
 and the quadratic form $\partial_i^2 N({\rm Imm}_{\nu}H(\lambda,\mu)(x_1,x_2,x_3,x_4))$ has two positive eigenvalue for $i\in[4]$.
 
 Nevertheless, for the polynomials $E_{\lambda/\mu}^{\theta}$ associated to the Jacobi–Trudi matrix $H(\lambda,\mu)$, we conjecture that their normalizations are Lorentzian. We have verified this conjecture for all border strips with at most 10 boxes.
 
  \begin{conj}\label{conj-2}
 	Given a border strip $\lambda/\mu$ and a partition $\theta$ of the same size. The polynomial $N(E_{\lambda/\mu}^{\theta})$ is Lorentzian.
 \end{conj}
 
 For immanants of Giambelli matrices, based on Theorem~\ref{thm-SNP-Giam}, we also propose the following conjecture and verify it for all possible partitions of size less than or equal to $9$ by using SageMath \cite{sage}. 

 \begin{conj}\label{conj-3}
	 Given a positive integer $k$, let $\lambda$ be a partition of rank $k$. Then for any partition $\nu$ of $k$, the polynomial $N({\rm Imm}_{\nu}G_{\lambda})$ is Lorentzian.
	 \end{conj}

	\vskip 0.5cm
\noindent \textbf{Acknowledgments.} This work is partially supported by
the Strategic Priority Research Program of the Chinese Academy of Sciences (No. XDB0510201), the NSFC grant (No.\ 12271511), and the Postdoctoral Fellowship Program of CPSF under Grant Number: GZC20241868. The author would like to thank Shaoshi Chen, Ethan Y.H. Li and Xin-Bei Liu for the helpful discussions.

\bibliography{sn-bibliography}

\end{document}